\newtheorem{theorem}{Theorem}[section]  
\newtheorem{lemma}[theorem]{Lemma }  
\newtheorem{proposition}[theorem]{Proposition} 
\theoremstyle{definition} 
\newtheorem{definition}[theorem]{Definition} 
\numberwithin{equation}{section}
\newcommand{\eps}{\varepsilon}
\newcommand{\calL}{\mathcal{L}}
\newcommand{\calF}{\mathcal{F}}
\newcommand{\sC}{\mathcal{C}}
\newcommand{\sF}{\mathcal{F}}
\newcommand{\sX}{\mathcal{X}}
\newcommand{\calO}{\mathcal{O}}
\newcommand{\sO}{\mathcal{O}}
\newcommand{\calD}{\mathcal{D}}
\newcommand{\calT}{\mathcal{T}}
\newcommand{\calS}{\mathcal{S}}
\newcommand{\sS}{\mathcal{S}}
\newcommand{\calB}{\mathcal{B}}
\newcommand{\calV}{\mathcal{V}}
\newcommand{\sE}{\mathcal{E}} 
\newcommand{\sH}{\mathcal{H}}
\newcommand{\calK}{\mathcal{K}}
\newcommand{\bE}{\operatorname{\mathbb{E}}} 
\newcommand{\Pp}{\operatorname{\mathbb{P}}} 
\newcommand{\bP}{\operatorname{\mathbb{P}}} 
\newcommand{\R}{\mathbb{R}}
\newcommand{\bR}{\mathbb{R}}
\newcommand{\Z}{{\mathbb Z}}
\newcommand{\bZ}{{\mathbb Z}}
\newcommand{\prt}{\partial}
\newcommand{\PBM}{P_{\text{BM}}}
\newcommand{\EBM}{E_{\text{BM}}}
\newcommand{\ball}{\mathcal{B}}
\newcommand{\res}{\mathbf{r}}
\newcommand{\assmp}{($\clubsuit$) }
\newcommand{\ol}{\overline}
\newcommand{\wh}{\widehat}
\newcommand{\wt}{\widetilde}
\newcommand{\std}{\mathbf{p}}
\DeclareMathOperator{\dist}{dist}
\DeclareMathOperator{\Osc}{Osc}
\DeclareMathOperator{\Var}{Var}
\DeclareMathOperator{\Cov}{Cov}
\def\bone{{\bf 1}}
\newcommand{\dP}{\mathbf{d}}
\def\Sd{{\rm d_S}}
\def\qP{{P}}
\def\qE{{E}}
\def\pd{{\partial}}
\def\q{\quad}
\def\be{\begin{equation}}
\def\ee{\end{equation}}
\def\bes{\begin{equation*}}
\def\ees{\end{equation*}}
\def\om{{\omega}}
\def\half{\frac12}
\def\lam{\lambda}
\def\sK{\calK}
\def\sm{\smallskip \noindent}
\def\be{\begin{equation}}
\def\ee{\end{equation}}
\def\bes{\begin{equation*}}
\def\ees{\end{equation*}}
\def\bfP{{\bf P}}
\def\eqd{{\buildrel (d) \over {\  =\ }}}
\begin{document}

\title[Invariance principle]{Comparison of quenched and annealed invariance   
principles for random conductance model: Part II}

\author{Martin Barlow, Krzysztof Burdzy and Ad\'am Tim\'ar}

\address{Department of Mathematics, University of British Columbia, 
Vancouver, B.C., Canada V6T 1Z2}

\address{Department of Mathematics, Box 354350, University of Washington, 
Seattle, WA 98195, USA}

\address{
Bolyai Institute, University of Szeged,
Aradi v. tere 1, 6720 Szeged,
Hungary
}

\thanks{Research supported in part by NSF Grant DMS-1206276, by 
NSERC, Canada, and Trinity College, Cambridge,  and by
MTA R\'enyi "Lendulet" Groups and Graphs Research Group.}

\begin{abstract}
We show that there exists an ergodic conductance environment such that the 
weak (annealed) invariance principle holds for the corresponding  
continuous time random walk but the quenched invariance principle does not hold.
In the present paper we give a proof of the full scaling limit for the weak invariance principle, improving the
result in an earlier paper where we obtained a subsequential limit.
\end{abstract}

\maketitle  

\section{Introduction}\label{intro}

This article contains the completion of the project started in a previous paper \cite{BBT1}, 
where we proved that there exists an ergodic conductance environment such that the 
weak (annealed) invariance principle holds for the corresponding  
continuous time random walk along a subsequence but the quenched invariance principle does not hold.
In the present paper we give a proof of the full scaling limit for the weak invariance principle, improving the
result in \cite{BBT1}.
The improved result is, in a sense, a quantitative form of the invariance 
principle. The proof consists of several lemmas. Some of them are specific to our 
model but some of them have the more general character and may serve as 
technical elements for related projects. Since this paper is a continuation of 
\cite{BBT1}, we start by presenting basic notation and definitions from that paper.

Let $d\geq 2$ and let $ E_d$ be the set of all non oriented edges in the 
$d$-dimensional integer lattice, that is,  $E_d = \{e = \{x,y\}: x,y \in \Z^d, |x-y|=1\}$.
Let $\{\mu_e\}_{e\in E_d}$ be a random process with non-negative values, 
defined on some probability space $(\Omega, \calF, \Pp)$. 
The process $\{\mu_e\}_{e\in E_d}$ represents random conductances.  
We write $\mu_{xy}  = \mu_{yx} = \mu_{\{x,y\}}$ and set 
$\mu_{xy}=0$ if $\{x,y\} \notin E_d$. Set
\begin{align*}
\mu_x = \sum_y \mu_{xy}, \qquad P(x,y) = \frac{\mu_{xy}}{\mu_x},
\end{align*}
with the convention that $0/0=0$ and $P(x,y)=0$ if $\{x,y\} \notin E_d$. 
For a fixed $\omega\in \Omega$, let  
$X = \{X_t, t\geq 0, \qP^x_\omega, x \in \Z^d\}$ be the 
continuous time random walk on $\Z^d$, with transition probabilities 
$P(x,y) = P_\omega(x,y)$, and exponential waiting times with mean $1/\mu_x$. 
The corresponding expectation will be denoted $\qE_\omega^x$. 
For a fixed $\omega\in \Omega$, the generator $\calL$ of $X$ is given by
\begin{align}\label{e:Ldef}
\calL f(x) = \sum_y \mu_{xy} (f(y) - f(x)).
\end{align}
In \cite{BD} this is called the {\em variable speed random walk} (VSRW)
among the conductances $\mu_e$.
This model, of a reversible (or symmetric) random walk in a random environment, is
often called the Random Conductance Model. 

We are interested in functional Central Limit Theorems (FCLTs)
for the process $X$. Given any process $X$,
for $\eps>0$, set $X^{\eps}_t = \eps X_{t /\eps^2}$, $t\geq 0$. 
Let $\calD_T = D([0,T], \R^d)$ denote the Skorokhod space, 
and let $\calD_\infty=D([0,\infty), \R^d)$.
Write $d_S$ for the Skorokhod metric and $\calB(\calD_T)$ for the $\sigma$-field of 
Borel sets in the corresponding topology. 
Let $X$ be the canonical process on $\calD_\infty$ or $\calD_T$, $\PBM$ be Wiener 
measure on $(\calD_\infty, \calB(\calD_\infty))$  and let $\EBM$ be the 
corresponding expectation. 
We will write $W$ for a standard Brownian motion.
It will be convenient to assume that $\{\mu_e\}_{e\in E_d}$ are 
defined on a probability space $(\Omega, \sF, \bP)$, and that
$X$ is defined on $(\Omega, \sF) \times (\calD_\infty, \calB(\calD_\infty))$ 
or  $(\Omega, \sF) \times (\calD_T, \calB(\calD_T))$. 
We also define the averaged or annealed measure $\bfP$ on 
$(\calD_\infty, \calB(\calD_\infty))$ or  $(\calD_T, \calB(\calD_T))$ by
\be \label{e:bfPdef}
 \bfP(G) = \bE P^0_\om(G). 
\ee

\begin{definition}\label{j1.2}
For a bounded function $F$ on $\calD_T$ and a constant matrix $\Sigma$, let 
$\Psi^F_\eps = \qE^0_\omega F(X^{\eps})$ and 
$\Psi^F_\Sigma = \EBM F(\Sigma W)$. We will use $I$ to denote the identity matrix.

\sm (i) We say that the {\em Quenched Functional CLT} (QFCLT) holds 
for $X$ with limit $\Sigma W$ if for every $T>0$ and 
every bounded continuous function $F$ on $\calD_T$ we 
have $\Psi^F_\eps \to \Psi^F_\Sigma$ as $\eps\to 0$, with $\Pp$-probability 1.\\
(ii) We say that the {\em Weak Functional CLT} (WFCLT) 
holds for $X$ with limit $\Sigma W$ if for every $T>0$ and every 
bounded continuous function $F$ on $\calD_T$ we have 
$\Psi^F_\eps \to \Psi^F_\Sigma$ as $\eps\to 0$, in $\Pp$-probability.\\
(iii) We say that the {\em Averaged (or Annealed) Functional CLT}
(AFCLT) holds for $X$ with limit $\Sigma W$ if for every $T>0$ and every 
bounded continuous function $F$ on $\calD_T$ we have 
$ \bE \Psi^F_\eps \to \Psi_{\Sigma}^F$.
This is the same as standard weak convergence with respect to the probability measure $\bfP$. 
\end{definition}

If we take $\Sigma$ to be non-random then, since $F$ is bounded, it is
immediate that QFCLT $\Rightarrow$ WFCLT. In general for the QFCLT the matrix
$\Sigma$ might depend on the environment $\mu_\cdot(\om)$. However, if
the environment is stationary and ergodic, then $\Sigma$ is a shift invariant
function of the environment, so must be $\bP$--a.s. constant.
In \cite{DFGW} it is proved that if $\mu_e$ is a stationary ergodic 
environment with $\bE \mu_e<\infty$ then the WFCLT holds. In \cite[Theorem 1.3]{BBT1} 
it is proved that for the random conductance model the AFCLT and WFCLT are equivalent.

\begin{definition}
We say an environment $(\mu_e)$ on $\bZ^d$ is {\em symmetric} if the law of  $(\mu_e)$ is 
invariant under symmetries of $\bZ^d$. 
\end{definition}

If $(\mu_e)$ is stationary, ergodic and symmetric, and the WFCLT holds with
limit $\Sigma W$ then the limiting covariance matrix $\Sigma^T \Sigma$ must also
be invariant under symmetries of $\bZ^d$, so must be a constant 
times the identity.

In a previous paper \cite{BBT1} we proved the following theorem:

\begin{theorem}\label{T:oldmain}
Let $d=2$ and $p<1$.
There exists a symmetric stationary ergodic environment $\{\mu_e\}_{e\in E_2}$
with $\bE (\mu_e^p \vee \mu_e^{-p})<\infty$ 
and a sequence $\eps_n \to 0$ such that\\
(a)  the WFCLT holds for $X^{\eps_n}$ with limit $W$, 
i.e., for every $T>0$ and every 
bounded continuous function $F$ on $\calD_T$ we have 
$\Psi^F_{\eps_n} \to \Psi^F_I$ as $n\to \infty$, in $\Pp$-probability,
\\
but \\
(b) the QFCLT does not hold for  $X^{\eps_n}$ with limit $ \Sigma W$ for any $\Sigma$.  
\end{theorem}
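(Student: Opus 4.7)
My plan is to use the ergodic, symmetric environment constructed in \cite{BBT1}: a superposition on $\bZ^2$ of independent ``obstacle'' layers at a rapidly growing sequence of scales $L_1\ll L_2\ll\cdots$, where each layer consists of square obstacles of side $L_k$ placed at sites of a stationary, lattice-symmetric point process $\sS_k\subseteq \bZ^2$ of intensity $\rho_k$, inside which every edge receives a small conductance $c_k$ (taking the minimum where obstacles from different scales overlap). The parameters $L_k$, $c_k$, $\rho_k$ are tuned so that (I) $\sum_k \rho_k L_k^2 c_k^{-p}<\infty$ for every $p<1$, which yields $\bE(\mu_e^p\vee \mu_e^{-p})<\infty$; (II) at each scale $k$ there is an event $A_k\in\sigma(\sS_k)$ with $\bP(A_k)\ge\delta>0$ ($\delta$ independent of $k$) on which the VSRW started at $0$ spends almost all of the rescaled time interval $[0,T]$ inside a single scale-$k$ obstacle with $\qP^0_\om$-probability $\to 1$, producing a rescaled displacement of order $\sqrt{c_k T}\to 0$; and (III) at the ``gap'' scales $r_n=\sqrt{L_n L_{n+1}}$, layers with $k\le n$ average out by homogenization while layers with $k\ge n+1$ are missed by the walk on $[0,T/\eps_n^2]$ with $\bP$-probability $\to 1$.

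For part (b), by symmetry and ergodicity any candidate QFCLT limit matrix $\Sigma$ is deterministic with $\Sigma^T\Sigma=cI$, and combining with part (a) together with QFCLT$\Rightarrow$WFCLT pins $c=1$, so $\Sigma W\eqd W$. It therefore suffices to produce a $\bP$-probability-one event on which $\Psi^F_{1/L_n}\not\to \EBM F(W)$ for some bounded continuous $F$. Take $F(X)=1\wedge|X_T|$, so that $\EBM F(W)=\kappa>0$ and $F(0)=0$. On $A_n$, property (II) forces $\Psi^F_{1/L_n}= \qE^0_\om F(X^{1/L_n})\to 0$. Because the layers $\sS_k$ are independent across $k$, the events $A_n$ are independent with $\sum_n \bP(A_n)=\infty$, so the second Borel--Cantelli lemma gives $A_n$ i.o.\ $\bP$-a.s., producing a subsequential limit of $\Psi^F_{1/L_n}$ bounded by $\kappa/2$ and contradicting QFCLT convergence to $\kappa$.

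For part (a), I invoke the AFCLT$\Leftrightarrow$WFCLT equivalence of \cite[Theorem 1.3]{BBT1}, reducing to $\bE\,\qE^0_\om F(X^{\eps_n})\to \EBM F(W)$ along $\eps_n=1/r_n$. Property (III) permits a coupling of $X^{\eps_n}$ to the VSRW in a ``homogenized'' environment in which the fine layers $k\le n$ have been replaced by their stationary mean and the coarse layers $k\ge n+1$ are absent within the walk's exploration range of diameter $O(r_n)$; the expected coupling error is controlled by the summability in (I) for the fine layers and by the bound $\rho_{n+1}r_n^2\to 0$ for the coarse layers. To the homogenized environment, which is stationary ergodic with bounded, close-to-constant conductances, the classical WFCLT of \cite{DFGW} applies; by lattice symmetry the limiting covariance is a scalar, and normalizing the mean conductance pins it to $1$, giving the limit $W$.

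The main obstacle will be property (II): showing quantitatively that when the origin sits inside a large scale-$k$ obstacle, the rescaled VSRW is \emph{genuinely} sub-diffusive at time $T$ with probability bounded below uniformly in $k$, rather than merely having smaller-but-positive Gaussian covariance. This reduces to sharp exit-time and heat-kernel estimates for the VSRW inside a single scale-$k$ obstacle that accommodate the nested hierarchy of smaller sub-obstacles inside it. Balancing $(L_k,c_k,\rho_k)$ so that (I), (II) and (III) hold simultaneously -- moments finite for every $p<1$, uniform trapping probability across all scales, and wash-out at the gap scales -- is the central delicacy of the construction.
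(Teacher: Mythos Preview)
Your construction diverges substantially from the one actually used in \cite{BBT1} (recalled in Section~\ref{const}), and two of the differences are not cosmetic but load-bearing.

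First, the paper's obstacles are not low-conductance squares placed by an independent point process; they are I-shaped sets containing edges of \emph{both} very small conductance $\eta_n$ \emph{and} very large conductance $K_n$, placed periodically on a nested hierarchy of tilings $\sS_n$ (so the levels are not independent). The parameter $K_n$ is essential: by Theorem~\ref{T:eK} it is chosen, level by level, so that the walk $X^{(n)}$ in the environment with only the first $n$ levels satisfies a QFCLT with limit \emph{exactly} $W$. This is how the limit in part~(a) is pinned to $\Sigma=I$. Your sentence ``normalizing the mean conductance pins it to $1$'' is not correct: the effective diffusivity in the homogenized limit is given by a variational formula and is strictly smaller than the arithmetic mean of the conductances whenever the environment is nonconstant. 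With only low-conductance obstacles you have no knob to restore diffusivity to $1$, and you would additionally need to prove that the truncated diffusivities $\sigma_n^2$ converge at all --- a nontrivial quantitative statement that the $K_n$-tuning trick sidesteps entirely.

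Second, the theorem asks for a \emph{single} sequence $\eps_n$ for which (a) holds and (b) fails. You use $\eps_n=1/r_n$ for~(a) and $1/L_n$ for~(b); showing $\Psi^F_{1/L_n}\not\to\Psi^F_I$ disproves the full-limit QFCLT but says nothing about convergence along $1/r_n$. Worse, your hypothesis $\bP(A_k)\ge\delta>0$ is incompatible with the WFCLT along any sequence that sees scale $L_k$: on $A_k$ the quenched law is far from Brownian, so $\Psi^F_{1/L_k}$ is bounded away from $\Psi^F_I$ on an event of $\bP$-probability at least $\delta$, and convergence in $\bP$-probability fails. The mechanism that makes (a) and (b) coexist along the \emph{same} $\eps_n$ is rather that the ``bad'' events have $\bP$-probability tending to $0$ (so WFCLT survives) but are non-summable and sufficiently decorrelated (so Borel--Cantelli forces them to occur infinitely often $\bP$-a.s., killing the QFCLT). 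In the paper's construction this comes from the fact that the level-$n$ obstacle occupies a vanishing but non-summable fraction $\asymp (\beta_n/a_n)^2$ of each tile.
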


In this paper we prove that for an environment similar to
that in Theorem \ref{T:oldmain} the WFCLT holds for $X^{\eps}$ as $\eps \to 0$,
and not just along a subsequence.

\begin{theorem}\label{T:main}
Let $d=2$ and $p<1$.
There exists a symmetric stationary ergodic environment $\{\mu_e\}_{e\in E_2}$
with $\bE (\mu_e^p \vee \mu_e^{-p})<\infty$ 
such that\\
(a)  the WFCLT holds for $X^{\eps}$ with limit $W$, 
i.e., for every $T>0$ and every 
bounded continuous function $F$ on $\calD_T$ we have 
$\Psi^F_{\eps} \to \Psi^F_I$ as $\eps \to 0$, in $\Pp$-probability,
\\
but \\
(b) the QFCLT does not hold for  $X^{\eps}$ with limit $ \Sigma W$ for any $\Sigma$.  
\end{theorem}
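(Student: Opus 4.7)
The environment will be a refinement of the one used in Theorem~\ref{T:oldmain}, designed so that the relevant estimates hold uniformly in $\eps$ rather than only along the subsequence $\eps_n$. By \cite[Theorem~1.3]{BBT1}, the WFCLT in (a) is equivalent to the AFCLT for the annealed measure $\bfP$ of \eqref{e:bfPdef}, so I plan to prove that $X^\eps\Rightarrow W$ under $\bfP$ as $\eps\to 0$. Part~(b) should follow by essentially the argument from \cite{BBT1}, because the local features responsible for the failure of the QFCLT need only occur on a sparse sequence of scales, and these can be preserved in the refined construction.

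I would establish (a) via the standard two-step scheme of tightness plus identification of the limit. For tightness of $\{X^\eps\}_{\eps>0}$ in $\calD_T$ under $\bfP$, I would use symmetry and stationarity of the environment together with $\bE(\mu_e^p\vee\mu_e^{-p})<\infty$ to derive an annealed second moment bound $\bE\qE^0_\om|X_t|^2\le Ct$ and an increment-regularity estimate of the form $\bE\qE^0_\om|X_t-X_s|^{2+\delta}\le C|t-s|^{1+\delta/2}$, strong enough for a Kolmogorov-type criterion in the Skorokhod topology. For identification of the limit, the strategy is to compare arbitrary scales with the ``good'' scales: given any $\eps>0$, pick the nearest $\eps_n$ from the subsequence of Theorem~\ref{T:oldmain}, so that $\eps/\eps_n$ remains bounded, and show that the $\bfP$-laws of $X^\eps$ and $X^{\eps_n}$ differ by $o(1)$ as $\eps\to 0$. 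Combined with the subsequential convergence from Theorem~\ref{T:oldmain}, this forces $X^\eps\Rightarrow W$ under $\bfP$.

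The main obstacle is precisely this scale-comparison step. At values of $\eps$ that lie strictly between two consecutive ``good'' levels of the hierarchical construction, the walk only partially averages over the anomalous blocks at the intermediate level, so its annealed distribution could a priori drift away from the Gaussian target. Overcoming this will require a combination of structural choices in the refined construction (for instance making the bad blocks sparse enough, or enlarging the set of good scales so that every $\eps$ is close to one) and general annealed heat-kernel upper bounds valid at every scale under only the assumed moment condition, so that the contribution of partially resolved scales can be controlled. I expect the several lemmas advertised in the introduction to provide exactly these ingredients, and this is where the bulk of the new technical work will lie.
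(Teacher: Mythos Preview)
Your outline has two concrete gaps.

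First, the tightness step as you describe it will not go through. With only $\bE(\mu_e^p\vee\mu_e^{-p})<\infty$ for $p<1$, there is no mechanism to produce an annealed $(2+\delta)$-moment bound on increments of the VSRW; the environment contains edges with conductance $K_n\to\infty$ (giving arbitrarily fast local moves) and edges with conductance $\eta_n\to 0$ (giving traps), and the moment condition is far too weak to yield a Kolmogorov-type increment estimate uniformly in $\eps$. The paper does not attempt tightness at all; it works directly with the Prokhorov distance.

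Second, and more serious, your scale-comparison step is based on the premise that $\eps/\eps_n$ stays bounded for the nearest good scale $\eps_n$. This is false for the construction: the conditions on the sequences force $a_{n+1}/a_n\to\infty$ (indeed $a_{n+1}$ is chosen \emph{after} $a_n$ precisely so that $X^{(n)}$ is already close to Brownian motion at scale $a_{n+1}$), so for $\eps$ between $1/a_{n+1}$ and $1/a_n$ the ratio to the nearest good scale is unbounded. Comparing the law of $X^\eps$ to that of $X^{\eps_n}$ for the \emph{same} process $X$ is therefore not the right move.

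What the paper does instead is compare $X$ not with itself at a different scale but with the auxiliary process $X^{(n)}$ in the truncated environment $\mu^n$. For $a\in[a_n,a_{n+1}]$ one couples $X$ and $X^{(n)}$ until the first visit to an obstacle of level $\ge n+1$, and shows (using the sparseness of those obstacles and the Brownian approximation for $X^{(n)}$) that no such visit occurs before time $a_{n+1}^2$ with high $\bfP$-probability. The heart of the argument is then to show that the rescaled $X^{(n)}$ is close to $W$ for \emph{every} $a\ge a_n$, not just $a=a_n$. This is done by a further decomposition $X^{(n)}=X^{n,1}+X^{n,2}$, where $X^{n,1}$ records the increments of $X^{(n)}$ while it is far from the $n$th-level obstacles and, after an appropriate time change $\wh\sigma^{n,1}$, has exactly the law of $X^{(n-1)}$; and $X^{n,2}$ is the remainder. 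The main technical proposition (Proposition~\ref{d22.2}) shows that both the time spent near obstacles, $\sigma^{n,2}_u/u$, and the displacement $\sup_{s\le u}u^{-1/2}|X^{n,2}_s|$, are small uniformly for $u\ge a_n^2$. Combining this with the known QFCLT for $X^{(n-1)}$ gives the uniform-in-$a$ bound on $\dP(\{a^{-1}X^{(n)}_{ta^2}\},\PBM)$ that you need.
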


For more remarks on this problem see \cite{BBT1}.

\sm {\bf Acknowledgment.}
We are grateful to Emmanuel Rio, Pierre Mathieu, Jean-Dominique Deuschel 
and Marek Biskup for some very useful discussions.

\section{Description of the environment}\label{const}  

Here we recall the environment given in \cite{BBT1}. We refer the reader to that
paper for proofs of some basic properties.

Let $\Omega = (0,\infty)^{E_2}$, and $\calF$ be the Borel $\sigma$-algebra defined 
using the usual product topology. Then every $t\in\Z^2$ defines a transformation 
$T_t (\omega)=\omega +t$ of $\Omega$. Stationarity and ergodicity of the measures 
defined below will be understood with respect to these transformations. 

All constants (often denoted $c_1, c_2$, etc.) are assumed to be strictly positive and finite.
For a set $A \subset \bZ^2$ let $E(A)\subset E_2$ be the set of all edges with both endpoints in
$A$. Let $E_h(A)$ and $E_v(A)$ respectively
be the set of horizontal and vertical edges in $E(A)$.
Write $x \sim y$ if $\{x,y\}$ is an edge in $\bZ^2$. Define the exterior boundary of $A$ by
$$ \pd A =\{ y \in \bZ^2 -A: y \sim x \text{ for some } x \in A \}. $$
Let also
$$ \pd_i A = \pd(\bZ^2 -A). $$ 
Define balls in the $\ell^\infty$ norm by $\ball(x,r)= \{y: ||x-y||_\infty \le r\}$; of 
course this is just the square with center $x$ and side $2r$.

Let $\{a_n\}_{n\geq 0}$,  $\{ \beta_n\}_{n \ge 1}$ and $\{b_n\}_{n\geq 1}$ be  
strictly increasing sequences of positive integers growing to infinity with $n$,
with 
$$ 1=a_0 < b_1 < \beta_1 < a_1 \ll b_2 <  \beta_2<   a_2 \ll b_3 \dots $$
We will impose a number of conditions on these sequences in the course
of the paper. We collect the main ones here.
There is some redundancy in the conditions, for easy reference.

\begin{enumerate}[(i)]
\item  $a_n$ is even  for all $n$. 
\item For each $n \ge 1$, $a_{n-1}$ divides $b_n$, 
and $b_n$ divides $\beta_n$ and $a_n$. 
\item  $b_1 \geq 10^{10}$.
\item  $a_n/\sqrt{2n} \le     b_n \le a_n /  \sqrt{n} $ for all $n$, and
$b_n \sim a_n/\sqrt{n}$.
\item $b_{n+1} \ge 2^n b_n$ for all $n$.
\item $b_n > 40 a_{n-1}$ for all $n$.
\item $b_n$ is large enough so that the estimates (5.1) and (6.1) of \cite{BBT1} hold.
\item $100 b_n  < \beta_n \le  b_n n^{1/4}  < 2 \beta_n <   a_n/10$ for $n$ large enough.
\end{enumerate}

In addition, at various points in the proof, we will assume that $a_n$ is sufficiently much
larger than $b_{n-1}$ so that a process $X^{(n-1)}$ defined below is such that for $a\ge a_n$
the rescaled process
$$ (a^{-1} X^{(n-1)}_{a^2 t}, t\ge 0)$$
is sufficiently close to Brownian motion.
We will mark the places in the proof where we impose these extra conditions by \assmp.

\smallskip\noindent
We begin our construction by defining a collection of squares in $\bZ^2$. Let
\begin{align*} 
B_n &= [0, a_n]^2,  \\
B_n' &= [0, a_n-1]^2 \cap \bZ^2 ,\\
\calS_n(x) &= \{ x + a_n y + B_n': \, y \in \bZ^2 \}.
\end{align*} 
Thus $\sS_n(x)$ gives a tiling of $\bZ^2$ by disjoint squares of side $a_n-1$
and period $a_n$.
We say that the tiling $\calS_{n-1}(x_{n-1})$ is a refinement
of $\calS_n(x_n)$ if every square $Q \in \calS_n(x_n)$ is a finite
union of squares in $\calS_{n-1}(x_{n-1})$. It is clear that 
$\calS_{n-1}(x_{n-1})$ is a refinement of $\calS_n(x_n)$ if
and only if $x_n = x_{n-1}+ a_{n-1}y$ for some $y \in \bZ^2$.

Take $\sO_1$ uniform in $B'_1$, and for $n\geq 2$
take $\sO_n$, conditional on $(\sO_1, \dots, \sO_{n-1})$, 
to be uniform in $B'_n \cap ( \sO_{n-1} + a_{n-1}\bZ^2)$. We now define random tilings by letting
\bes
 \sS_n = \sS_n( \sO_n), \, n \ge 1.  
\ees

Let $\eta_n$, $K_n$ be positive constants; we will have $\eta_n \ll 1 \ll K_n$.
We define conductances  on $E_2$ as follows. 
Recall that $a_n$ is even, and let $a_n' = \frac12 a_n$. Let
$$ C_n = \{ (x,y) \in B_n \cap \bZ^2: y \ge x, x+y \le a_n \}. $$
We first define conductances $\nu^{n,0}_e$ for $e \in E(C_n)$. Let
\begin{align*}
D_n^{00} &= \big\{ (a'_n - \beta_n,y), a'_n - 10 b_n \le y \le a'_n + 10 b_n \big\},  \\
D_n^{01} &= \big\{ (x, a'_n + 10 b_n),  (x, a'_n + 10 b_n + 1), (x, a'_n - 10 b_n),  (x, a'_n - 10 b_n -1),   \\
\nonumber   
   & \q \q \q a'_n -\beta_n -b_n \le x \le a'_n -\beta_n + b_n \big\}.
\end{align*}
Thus the set $D^{00}_n \cup D_n^{01}$ resembles the letter I (see Fig.~\ref{fig1}).

For an edge $e \in E(C_n)$ we set 
\begin{align*} 
 \nu^{n,0}_{e}  &= \eta_n \q \text {if } e \in E_v(D^{01}_n), \\
 \nu^{n,0}_{e}  &= K_n \q \text {if } e \in E(D^{00}_n), \\
  \nu^{n,0}_{e}  &= 1 \q \text {otherwise.} 
\end{align*} 

\begin{figure} \includegraphics[width=4cm]{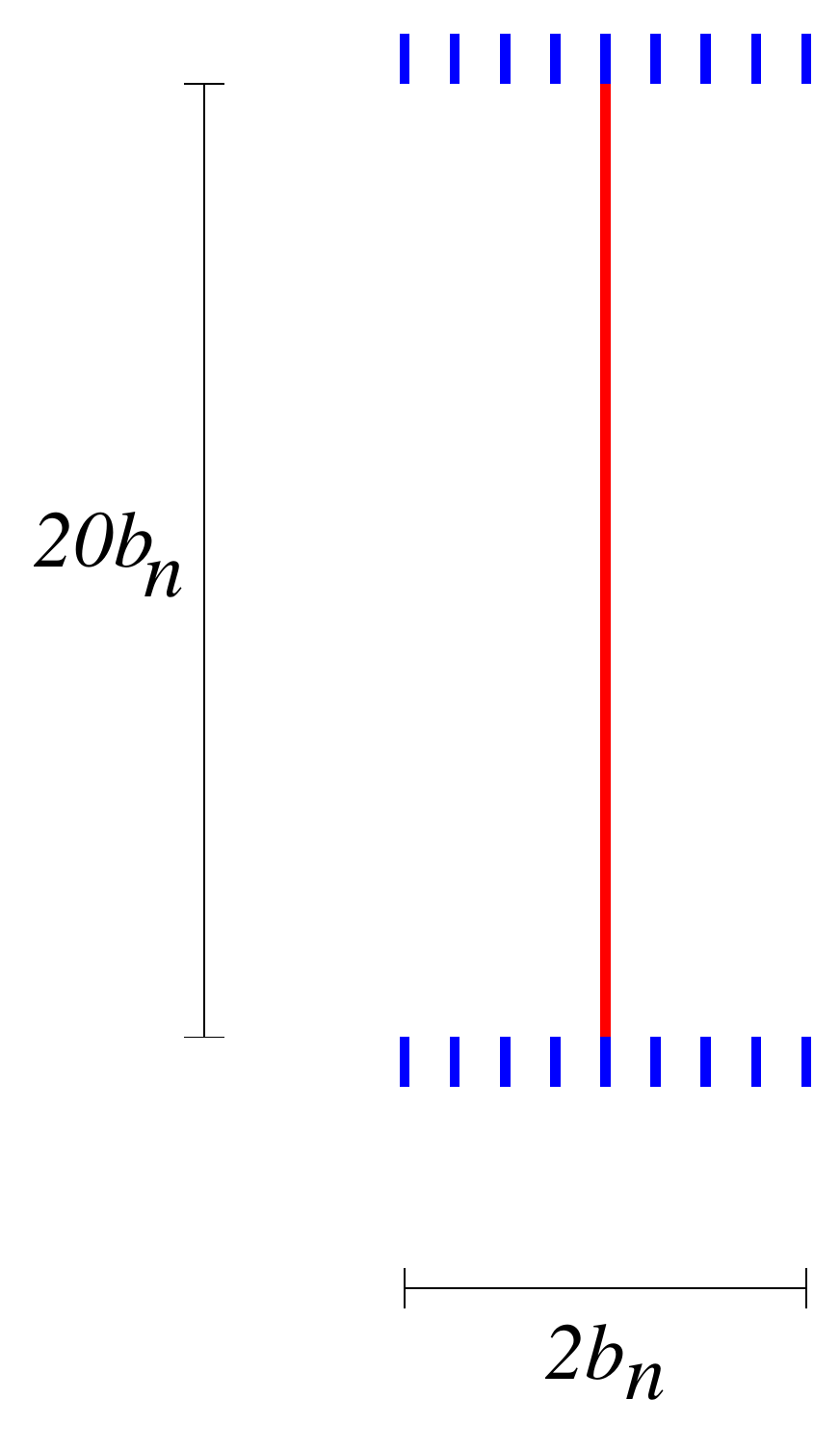}
\caption{The set $D^{00}_n \cup D_n^{01}$ resembles the letter I.
Blue edges have very low conductance. The red line represents edges with very 
high conductance. Drawing not to scale. 
}
\label{fig1}
\end{figure}

We then extend $\nu^{n,0}$ by symmetry to $E(B_n)$.
More precisely,
for $z =(x,y) \in B_n$, let $R_1 z=( y,x)$ and  $R_2z = (a_n-y,a_n-x)$, so that
$R_1$ and $R_2$ are reflections in the lines $y=x$ and $x+y=a_n$.
We define $R_i$ on edges by $R_i (\{x,y\}) = \{R_i x, R_i y \}$ for $x,y \in B_n$. 
We then extend $\nu^{0,n}$ to $E( B_n)$ so that
$\nu^{0,n}_e =  \nu^{0,n}_{R_1 e }=\nu^{0,n}_{R_2 e }$ for  $e \in E(B_n)$.
We define the {\em obstacle} set $D_n^0$ by setting
$$ D_n^0 = \bigcup_{i=0}^1 \big( D_n^{0,i} \cup R_1(D_n^{0,i})  \cup  R_2(D_n^{0,i})
 \cup R_1R_2 (D_n^{0,i} ) \big). $$
Note that $\nu^{n,0}_e=1$ for every edge adjacent to the boundary of $B_n$,
or indeed within a distance $ a_n/4$ of this boundary.
If $e=(x,y)$, we will write $e-z = (x-z,y-z)$. 
Next we extend $\nu^{n,0}$ to $E_2$ by periodicity, i.e.,
$\nu^{n,0}_e = \nu^{n,0}_{e+ a_n x}$ for all $x\in \Z^2$.
We define the conductances $\nu^n$ by translation by $\sO_n$, so that
\bes
 \nu^n_e =\nu^{n,0}_{e-\sO_n}, \, e \in E_2.
\ees
We also define the obstacle set at scale $n$ by
\be\label{ma26.1}
 D_n = \bigcup_{ x \in \bZ^2} (a_n x + \sO_n + D^0_n ).
\ee
We will sometimes call the set $D_n$ the set of $n$th level obstacles.

We define the environment $\mu^n_e$ inductively by
\begin{align*}
 \mu^n_e &= \nu^{n}_e \q \text{ if } \nu^n_e \neq 1, \\
 \mu^n_e &= \mu^{n-1}_e \q \text{ if } \nu^n_e=1.
\end{align*}
Once we have proved the limit exists, we will set
\be \label{e:mudef}
 \mu_e = \lim_n \mu^n_e.
\ee

\begin{lemma} \label{L:erg} (See  \cite[Theorem 3.1]{BBT1}).\\
(a) The environments $(\nu^n_e, e\in E_2)$, $(\mu^n_e, e\in E_2)$
are stationary, symmetric and ergodic.\\
(b) The limit \eqref{e:mudef} exists $\bP$--a.s. \\
(c) The environment $(\mu_e, e \in E_2)$ is  stationary, symmetric and ergodic.
\end{lemma}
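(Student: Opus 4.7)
My plan is to handle (a), (b), (c) in turn, with ergodicity of $(\mu_e)$ being the main difficulty.

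First I would introduce a convenient parametrization of the construction. Condition (ii) gives $a_{n-1}\mid a_n$, so by induction $\sO_n$ is marginally uniform on $B_n'$: each of the $(a_n/a_{n-1})^2$ elements of $B_n'$ congruent to $\sO_{n-1}\pmod{a_{n-1}}$ is chosen with equal probability. Equivalently, $\sO_n$ alone determines the entire hierarchy, since $\sO_k$ for $k<n$ is the unique element of $B_k'$ congruent to $\sO_n\pmod{a_k}$. Stationarity of $(\nu^n_e)$ is then immediate: by $a_n$-periodicity of $\nu^{n,0}$, the shift $e\mapsto e+t$ corresponds to $\sO_n\mapsto\sO_n-t\pmod{a_n\bZ^2}$, a measure-preserving bijection of $B_n'$. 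The analogous hierarchical shift $\sO_k\mapsto\sO_k-t\pmod{a_k\bZ^2}$ at every level preserves the refinement constraints (the differences $\sO_k-\sO_{k-1}$ are unchanged), giving stationarity of $(\mu^n_e)$. Symmetry is built in: $D_n^0$ is explicitly symmetrized under $R_1$ and $R_2$, and combined with stationarity this yields symmetry of both $(\nu^n_e)$ and $(\mu^n_e)$.

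Ergodicity of $(\mu^n_e)$ is the heart of (a). Using the parametrization above, any $\bZ^2$-shift-invariant event $A$ pulls back to an event $A'\subset B_n'$ invariant under $\sO_n\mapsto\sO_n-t\pmod{a_n\bZ^2}$ for every $t\in\bZ^2$. This $\bZ^2$-action factors through a free transitive action of $(\bZ/a_n\bZ)^2$ on the $a_n^2$-point space $B_n'$; the only invariant subsets of a transitive group action are $\emptyset$ and the whole orbit, so $\bP(A)\in\{0,1\}$. Ergodicity of $(\nu^n_e)$ follows as the special case where only $\sO_n$ is used.

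For (b), a fixed edge $e$ satisfies $\nu^n_e\neq 1$ only if $e-\sO_n$ lies in the obstacle pattern within one period, and since $|D_n^0|=O(b_n)$ this has probability $O(b_n/a_n^2)$. Condition (iv) gives $a_n\ge\sqrt n\,b_n$, so $b_n/a_n^2\le 1/(nb_n)$, and condition (v) makes $b_n$ grow geometrically; hence $\sum_n\bP(\nu^n_e\neq 1)<\infty$ and Borel--Cantelli gives $\mu^n_e=\mu^{n-1}_e$ for all large $n$, $\bP$-a.s. For (c), stationarity and symmetry transfer immediately to the a.s.\ limit $\mu_e$. For ergodicity of $(\mu_e)$, I would approximate any shift-invariant event $A$ in $L^1(\bP)$ by a cylinder event $B$ in finitely many $\mu$-coordinates; on the full-measure event that $\mu=\mu^n$ on those coordinates for all large $n$, $B$ coincides with some $B'\in\sigma(\mu^n)$. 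Combining the exact shift-invariance of $A$ with the mean ergodic theorem for $\I_{B'}$ (valid by ergodicity of $(\mu^n_e)$) and averaging $\bE[\I_A\cdot\I_{\tau_tB'}]$ over a large box of $t$'s forces $\bP(A)^2=\bP(A)$ in the limit. The main obstacle is making this two-step approximation precise: the cylinder level and the scale $n$ must be chosen together so that the finite-box approximation errors remain controlled after averaging over the infinite $\bZ^2$-action.
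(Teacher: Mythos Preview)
The paper itself gives no proof of this lemma; it simply cites \cite[Theorem 3.1]{BBT1}. So there is nothing in-text to compare against, and I will assess your argument on its own.

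Your treatment of (a) and (b) is correct. The key observation that $\sO_n$ alone determines $\sO_1,\dots,\sO_{n-1}$ (since $a_{k-1}\mid a_k$ forces $\sO_k\equiv\sO_n\pmod{a_k}$) is exactly what makes the ergodicity argument for $(\mu^n_e)$ clean: the whole environment at level $n$ is a function of a single uniform variable on $B_n'$, on which $\bZ^2$ acts transitively via $(\bZ/a_n\bZ)^2$. Your Borel--Cantelli count for (b) is also right; $|D_n^0|=O(b_n)$ and $b_n/a_n^2\le 1/(nb_n)$ is summable.

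For (c), your outline is on the right track but, as you yourself flag, the double approximation (cylinder in $\mu$, then passage to $\mu^n$) is awkward to make rigorous. There is a much cleaner route that bypasses it entirely. Set $\sF_n=\sigma(\sO_n)$; by your own observation these $\sigma$-fields are increasing, and $\sigma(\mu_e:e\in E_2)\subset\sF_\infty:=\bigvee_n\sF_n$. If $A$ is shift-invariant, then so is $\bE[\I_A\mid\sF_n]$, because each shift $T_t$ acts as a measure-preserving automorphism of $(\Omega,\sF_n,\bP)$. But any $\sF_n$-measurable shift-invariant function is a.s.\ constant by the transitivity argument you already gave, hence $\bE[\I_A\mid\sF_n]=\bP(A)$ a.s.\ for every $n$. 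Martingale convergence then gives $\I_A=\bP(A)$ a.s., so $\bP(A)\in\{0,1\}$. This replaces your two-step approximation by a single line and removes the obstacle you identified.
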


Now let 
\begin{align}\label{j27.4}
\calL_n f(x) = \sum_{y} \mu^n_{xy} (f(y)-f(x)), 
\end{align}
and $X^{(n)}$ be the associated Markov process. Set
\be \label{e:etadef}
  \eta_n =   b_n^{-(1+1/n)}, \, n \ge 1.
\ee
From Section 4 of \cite{BBT1} we have:

\begin{theorem} \label{T:eK}
For each $n$ there exists a constant $K_n$, depending on $\eta_1, K_1, \dots \eta_{n-1}, K_{n-1}$,
such that the QFCLT holds for $X^{(n)}$ with limit $W$.
\end{theorem}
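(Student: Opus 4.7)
The conductances $(\mu^n_e)$ take values in the finite set $\{\eta_1,\dots,\eta_n, 1, K_1,\dots,K_n\}$ and are therefore uniformly elliptic, with $c_n := \min_i \eta_i \le \mu^n_e \le C_n := \max_i K_i$; by Lemma~\ref{L:erg} they are stationary, symmetric and ergodic. For uniformly elliptic symmetric stationary ergodic conductances on $\bZ^2$ the QFCLT is classical: one constructs the corrector for the Kipnis--Varadhan martingale decomposition, and its sublinearity $\bP$-a.s. follows from the uniform-ellipticity heat-kernel bounds. This gives a QFCLT for $X^{(n)}$ with some deterministic limit $\Sigma_n W$. Since the law of $\mu^n$ is invariant under the symmetries of $\bZ^2$, so is $\Sigma_n^T\Sigma_n$, and hence $\Sigma_n^T\Sigma_n=\sigma_n^2 I$ for some $\sigma_n\ge 0$.

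The plan is now to choose $K_n$ so that $\sigma_n=1$. Fix $\eta_1,K_1,\dots,\eta_{n-1},K_{n-1}$ inductively and $\eta_n$ by \eqref{e:etadef}, and view $\sigma_n=\sigma_n(K_n)$ as a function of $K_n \in [1,\infty)$. It suffices to prove (i) $\sigma_n$ is continuous in $K_n$, (ii) $\sigma_n(1)<1$, and (iii) $\sigma_n(K)>1$ for $K$ sufficiently large; the intermediate value theorem will then supply the required $K_n$.

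Statement (i) follows from the standard variational formula for $\sigma_n^2$, since only conductances on a set of bounded density are being varied and the minimizing corrector depends continuously on the environment. For (ii), the environment $\mu^n|_{K_n=1}$ is obtained from $\mu^{n-1}$ by lowering a positive density of edge conductances from $1$ to $\eta_n<1$; Rayleigh's monotonicity principle combined with the inductive hypothesis $\sigma_{n-1}=1$ yields $\sigma_n(1)<1$. Part (iii) is the main technical step. The high-conductance spines inside each I-shaped obstacle provide fast transport: a walker entering a spine performs $\sim K_n$ SRW-like steps along it in real time $O(1)$ before exiting, producing variance of order $\min(K_n,b_n^2)$ in the spine direction per visit, and by ergodicity such visits occur at a positive density in time. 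Testing the variational formula for $\sigma_n^2$ with trial correctors that are locally constant on each spine then produces a lower bound on $\sigma_n^2$ that grows without bound in $K_n$, giving (iii) and completing the argument.
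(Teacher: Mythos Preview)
The paper does not prove this statement here; it imports the result from \cite[Section~4]{BBT1}. Your overall plan --- QFCLT for uniformly elliptic ergodic conductances, then tune $K_n$ via the intermediate value theorem --- is indeed the plan of \cite{BBT1}. The gap is in your step~(iii).

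Your assertion that $\sigma_n(K)$ ``grows without bound'' in $K$ is false. For the VSRW in a periodic environment (made stationary by the random shift $\sO_n$), $\sigma_n^2$ is the effective conductance of the period cell. By Rayleigh monotonicity this is increasing in $K_n$, but as $K_n\to\infty$ it converges to the \emph{finite} conductance of the network obtained by collapsing each spine to a single vertex: a bounded high-conductance inclusion in an order-one medium cannot make the macroscopic conductivity diverge. Your heuristic ``variance $\sim\min(K_n,b_n^2)$ per spine visit at positive time-density'' ignores the strong negative correlation between successive spine excursions --- the walker enters, drifts along the spine, exits nearby, re-enters, and tends to drift back --- so the net long-time variance contribution from the spines is bounded independently of $K_n$. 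Finally, plugging ``trial correctors'' into the variational formula yields \emph{upper} bounds on $\sigma_n^2$, not lower bounds; lower bounds come from the dual (Thomson) principle via trial flows.

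What step~(iii) actually requires is the finer statement $\lim_{K\to\infty}\sigma_n^2(K)>1$, i.e.\ that short-circuiting the spines raises the cell conductance by strictly more than the $\eta_n$-caps lower it. This is not automatic: it depends on the specific placement and dimensions of the I-shaped obstacle (the caps sit exactly at the spine's endpoints so that the blocked current is efficiently rerouted through the spine), and establishing it quantitatively is precisely the content of \cite[Section~4]{BBT1}. Your sketch does not supply that argument.
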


For each $n$ the process $X^{(n)}$ has invariant measure which is counting measure 
on $\bZ^2$. For $x \in \bR^2$ and $a>0$ write $[xa]$ for the point in $\bZ^2$ closest to $xa$.
(We use some procedure to break ties.) We have the following bounds on the transition
probabilities of $X^{(n)}$ from \cite{BZ}. We remark that the constant $M_n$ below is
not effective -- i.e. the proof does not give any control on its value. 
Write $k_t(x,y) = (2\pi t)^{-1} \exp( -|x-y|^2/2t)$ for the transition density of Brownian motion
in $\bR^2$, and
$$ p^{\om,n}_t(x,y) = P^x_\om( X^{(n)}_t =y )$$
for the transition probabilities for $X^{(n)}$.

\begin{lemma} \label{L:hkXn} 
For each $0< \delta < T$  there exists $M_n=M_n(\delta,T)$ such that for $a \ge M_n$ 
\be \label{e:GB1}
\half k_t(x,y) \le a^{2} p^{\om,n}_{a^2t}([xa],[ya])  \le 2 k_t(x,y) \, \hbox { for all }
 \delta \le t \le T, |x|, |y| \le T^2.
\ee
\end{lemma}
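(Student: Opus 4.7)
The plan is to reduce the lemma to an invocation of the quenched local central limit theorem (LCLT) for random conductance models as supplied by \cite{BZ}, so the main task is to verify that the hypotheses of that result apply to the level-$n$ environment $(\mu^n_e)$. A pointwise two-sided heat-kernel bound of the form (\ref{e:GB1}) is exactly the conclusion of such an LCLT, once one has (i) stationarity and ergodicity of the environment, (ii) a deterministic uniform ellipticity bound, and (iii) a QFCLT with non-degenerate Gaussian limit.

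All three ingredients are already in place at level $n$. By Lemma \ref{L:erg}(a), $(\mu^n_e)$ is stationary, symmetric, and ergodic. From the inductive definition $\mu^n_e \in \{\nu^n_e,\mu^{n-1}_e\}$ one checks inductively that every level-$n$ conductance lies in the finite set $\{\eta_1,\dots,\eta_n\}\cup\{1\}\cup\{K_1,\dots,K_n\}$, hence
\bes
  \min_{k \le n}\eta_k \;\le\; \mu^n_e \;\le\; \max_{k \le n} K_k
\ees
deterministically. Thus the pre-limit environment is uniformly elliptic with constants depending on $n$ (the limiting environment $\mu_e$ in \eqref{e:mudef} is \emph{not}, which is why the lemma is only stated at finite stages). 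Finally, Theorem \ref{T:eK} supplies the QFCLT for $X^{(n)}$ with limit $W$.

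Once uniform ellipticity is in force, the full Aronson-Nash-De Giorgi-Moser toolkit becomes available for $\calL_n$: Gaussian upper and lower bounds on $p^{\om,n}_t(x,y)$ for $t \ge 1$, a parabolic Harnack inequality, and H\"older regularity of space-time caloric functions. Applied to the rescaled kernels $(t,x,y) \mapsto a^{2} p^{\om,n}_{a^2 t}([xa],[ya])$, these give equicontinuity and pre-compactness on the compact set $\{\delta \le t \le T,\ |x|,|y|\le T^2\}$, while the QFCLT identifies every subsequential limit as the Gaussian density $k_t(x,y)$. Since $k_t$ is bounded away from $0$ uniformly on that compact region, the multiplicative ratio between $a^{2}p^{\om,n}_{a^2 t}([xa],[ya])$ and $k_t(x,y)$ can be forced into $[\tfrac12,2]$ for all $a \ge M_n(\delta,T)$. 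The only genuine obstacle is confirming that \cite{BZ} (or an equivalent reference such as Andres-Deuschel-Slowik or Barlow-Hambly) states the LCLT in precisely this uniform multiplicative form on compact space-time regions; the non-effectivity of $M_n$ is then simply inherited from the non-effective rate of convergence in the underlying QFCLT, and no improvement on it is sought here.
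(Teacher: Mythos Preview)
Your proposal is correct and matches the paper's approach: the paper gives no proof of this lemma at all, simply attributing the bound to \cite{BZ}, and your sketch is precisely the verification that the hypotheses of a quenched local CLT (uniform ellipticity at each fixed level $n$, stationarity/ergodicity from Lemma~\ref{L:erg}, and the QFCLT from Theorem~\ref{T:eK}) are met, followed by the standard PHI/H\"older-regularity compactness argument that upgrades QFCLT to LCLT. One small point worth making explicit is that, as the paper notes just after \eqref{e:PdistBM}, the field $(\mu^n_e)$ is periodic with period $a_n$ and hence takes only finitely many values as a function of $\omega$; this is what guarantees that a single $M_n$ works for \emph{every} $\omega$ rather than only $\bP$-a.e., and explains why the lemma is stated without an a.s.\ qualifier.
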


\section{Preliminary results}

Since a proof of Theorem \ref{T:oldmain}(b) was given in \cite{BBT1}, 
all we need to prove is part (a) of Theorem \ref{T:main}.
The argument consists of several lemmas. We start with some preliminary 
results on weak convergence of probability measures on the space of c\`adl\`ag functions. 
Recall the definitions of the measures $\bP$ and $P^0_\om$.

Recall that  $\calD := \calD_1 = D([0,1], \R^2)$ denotes the space of c\`adl\`ag functions 
equipped with the Skorokhod metric $\Sd$ defined as follows (see \cite[p.~111]{B}). 
Let $\Lambda$ be the family of continuous strictly increasing functions $\lambda$ 
mapping $[0,1]$ onto itself. In particular, $\lambda(0) =0$ and $\lambda(1) =1$. 
If $x(t), y(t) \in  \calD$ then  
\begin{align*}
\Sd(x,y) = \inf_{\lambda \in \Lambda}
\max\Big( \sup_{t\in[0,1]} |\lambda(t) - t|, \sup_{t\in[0,1]} |y(\lambda(t)) - x(t)| \Big).
\end{align*}
For $x(t) \in  \calD$, let $\Osc(x, \delta) = \sup\{|x(t)-x(s)|: s,t\in[0,1], |s-t|\le \delta\}$.

\begin{lemma}\label{d21.2}
Suppose that $\sigma: [0,1] \to [0,1]$ is continuous, non-decreasing and $\sigma(0) = 0$ 
(we do not require that $\sigma(1) = 1$).  
Suppose that $|\sigma(t) - t| \le \delta$ for all $t\in[0,1]$.
Let  $\eps\geq0$, $\delta_1>0$, $x, y \in  \calD$ with
$\Sd(x(\,\cdot\,), y(\,\cdot\,))\le \eps$, and
$\Osc(x, \delta) \vee \Osc(y, \delta) \le \delta_1$. Then
$\Sd(x(\sigma(\,\cdot\,)), y(\sigma(\,\cdot\,))) \le \eps + 2\delta_1$.
\end{lemma}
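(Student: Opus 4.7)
The plan is to reuse the \emph{same} time change $\lambda\in\Lambda$ that (nearly) realises the Skorokhod distance between $x$ and $y$ as the time change for the pair $(x\circ\sigma,y\circ\sigma)$. The only subtle point is that $\sigma$ itself is not assumed to lie in $\Lambda$ (it may fail to be strictly increasing or to map onto $[0,1]$), so one cannot cleanly form a conjugate reparametrisation such as $\sigma^{-1}\lambda\sigma$; but the definition of $\Sd$ only requires \emph{some} $\lambda\in\Lambda$, and the original $\lambda$ will do.

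Fix $\eta>0$ and pick $\lambda\in\Lambda$ with
\[
\sup_{t\in[0,1]}|\lambda(t)-t|\le \eps+\eta,\qquad
\sup_{t\in[0,1]}|y(\lambda(t))-x(t)|\le \eps+\eta.
\]
With this same $\lambda$ as the candidate time change for $(x\circ\sigma,y\circ\sigma)$, the first term in the Skorokhod metric is unchanged and $\le \eps+\eta$, so the whole argument reduces to controlling $\sup_t|y(\sigma(\lambda(t)))-x(\sigma(t))|$. I would then split this via the triangle inequality through the intermediate points $y(\lambda(t))$ and $x(t)$:
\[
y(\sigma(\lambda(t)))-x(\sigma(t))
=\bigl[y(\sigma(\lambda(t)))-y(\lambda(t))\bigr]
+\bigl[y(\lambda(t))-x(t)\bigr]
+\bigl[x(t)-x(\sigma(t))\bigr].
\]
The middle bracket is $\le \eps+\eta$ by choice of $\lambda$. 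For the outer brackets, the hypothesis $|\sigma(s)-s|\le\delta$ applied at $s=\lambda(t)$ and at $s=t$ puts the two arguments of $y$ (respectively $x$) within distance $\delta$, so oscillation control bounds each outer bracket by $\Osc(y,\delta)\le\delta_1$, respectively $\Osc(x,\delta)\le\delta_1$.

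Adding up, $\sup_t|y(\sigma(\lambda(t)))-x(\sigma(t))|\le \eps+\eta+2\delta_1$, and combined with the time-change estimate this gives $\Sd(x\circ\sigma,y\circ\sigma)\le \eps+\eta+2\delta_1$; sending $\eta\downarrow 0$ finishes the proof. There is no real obstacle here beyond recognising that one should not try to modify $\lambda$ to accommodate $\sigma$ — the non-invertibility of $\sigma$ would make any such modification awkward — and should instead absorb the mismatch between $\sigma\circ\lambda$ and $\lambda\circ\sigma$ into the oscillation bound, which is precisely what the hypothesis $\Osc(x,\delta)\vee\Osc(y,\delta)\le\delta_1$ is designed to permit.
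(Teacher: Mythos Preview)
Your proof is correct and is essentially identical to the paper's: the same $\lambda$ is reused, the same three-term triangle splitting through $y(\lambda(t))$ and $x(t)$ is applied, and the outer terms are bounded by the oscillation hypothesis in exactly the same way. The only cosmetic difference is that you write $\eps+\eta$ where the paper writes $\eps_1$.
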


\begin{proof}
For any $\eps_1> \eps$ there exists $\lambda\in \Lambda$ such that,
\begin{align*}
\max\Big( \sup_{t\in[0,1]} |\lambda(t) - t|,
\sup_{t\in[0,1]} |y(\lambda(t)) - x(t)| \Big)\le\eps_1.
\end{align*}
We have for $\lambda$ satisfying the above condition,
\begin{align*}
&\sup_{t\in[0,1]} |y(\sigma(\lambda(t))) - x(\sigma(t))|\\
&\qquad \le 
\sup_{t\in[0,1]} (|y(\sigma(\lambda(t))) - y(\lambda(t))|
+ |y(\lambda(t)) - x(t)| + |x(t) - x(\sigma(t))|) \\
&\qquad \le \Osc(y,\delta) + \eps_1 + \Osc(x,\delta) \le \eps_1 + 2 \delta_1. 
\end{align*}
Hence,
\begin{align*}
\max\Big( \sup_{t\in[0,1]} |\lambda(t) - t|,
\sup_{t\in[0,1]} |y(\sigma(\lambda(t))) - x(\sigma(t))|
\Big) \le \eps_1 + 2 \delta_1.
\end{align*}
Taking infimum over all $\eps_1 > \eps$ we obtain
$\Sd(x(\sigma(\,\cdot\,)), y(\sigma(\,\cdot\,)))
\le \eps + 2\delta_1$.
\end{proof}

Let $\dP$ denote the Prokhorov distance between probability measures on a probability space defined 
as follows (see \cite[p.~238]{B}). 
Recall that 
$\Omega = (0,\infty)^{E_2}$ and $\calF$ is the Borel $\sigma$-algebra defined 
using the usual product topology.
We will use measurable spaces $(\calD_T, \calB(\calD_T))$ and 
 $(\Omega, \sF) \times (\calD_T, \calB(\calD_T))$, for a fixed $T$ (often $T=1$).
Note that $\calD_T$ and $\Omega \times \calD_T$ are metrizable, with the metrics generating the usual topologies. A ball around a set $A$ with radius $\eps$ will
be denoted $\calB(A,\eps)$ in either space. 
For probability measures $P$ and $Q$, 
$\dP(P,Q) $ is the infimum of $\eps>0$ such that $P(A) \le Q(\calB(A,\eps)) + \eps$ and 
$Q(A) \le P(\calB(A,\eps)) + \eps$ for all Borel sets $A$.
Convergence in the metric $\dP$ is equivalent to the weak convergence of measures.
By abuse of notation we will sometimes write arguments of the function 
$\dP(\,\cdot\,,\,\cdot\,)$ as processes rather than their distributions:  for example we will write
$\dP( \{(1/a)X^{(n)}_{ta^2}, t\in[ 0,1]\}, \PBM)$.
We will use $\dP$ for the Prokhorov distance
between probability measures on $(\Omega, \sF) \times (\calD_T, \calB(\calD_T))$. We will write $\dP_\omega$ for the metric on the space
$(\calD_T, \calB(\calD_T))$.
It is straightforward to verify that if,  for some processes $Y$ and $Z$, 
$\dP_\omega(Y,Z) \le \eps$ for $\bP$--a.a. $\omega$, then $\dP(Y,Z) \le \eps$.

We will sometimes write $W(t)=W_t$ and similarly for other processes.

\begin{lemma}\label{d21.1}
There exists a function $\rho: (0,\infty) \to (0,\infty)$ such that $\lim_{\delta\downarrow 0}
 \rho(\delta) = 0$ and  the following holds.
Suppose that $\delta,\delta'\in (0,1)$ and $\sigma: [0,1] \to [0,1]$ is a non-decreasing 
stochastic process such that $t-\sigma_t \in [0,\delta]$ for all $t$, with probability greater 
than $1-\delta'$. Suppose that $\{W_t, t\geq 0\}$ has the distribution $\PBM$ and 
$W^*_t = W(\sigma_t)$ for $t\in[0,1]$. 
Then $\dP(\{W^*_t, t\in[0,1]\}, \PBM) \le \rho(\delta) + \delta'$.
\end{lemma}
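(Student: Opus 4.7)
The plan is to convert the claim into a pointwise bound on $\Sd(W^*, W)$ holding on a high-probability event, and then invoke Strassen's coupling characterization of the Prokhorov metric: if $(X,Y)$ is a pair of random variables with marginal laws $\mu, \nu$ and $\Pp(d(X,Y)>\eps)\le \eps$, then $\dP(\mu, \nu)\le \eps$. Since $W^*$ and $W$ are already defined on the same probability space with $W \sim \PBM$, it is enough to exhibit some $\eps = \rho(\delta) + \delta'$ for which $\Pp(\Sd(W^*, W) > \eps) \le \eps$.

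On the good event $E := \{\,t - \sigma_t \in [0,\delta] \text{ for all } t \in [0,1]\,\}$, which has probability at least $1 - \delta'$ by hypothesis, taking $\lambda = \mathrm{id}\in \Lambda$ in the definition of $\Sd$ gives
\begin{equation*}
\Sd(W^*, W) \;\le\; \sup_{t\in[0,1]} |W(\sigma_t) - W(t)| \;\le\; \Osc(W, \delta),
\end{equation*}
since $|\sigma_t - t| \le \delta$ on $E$. For the other ingredient I invoke the a.s.\ uniform continuity of Brownian paths: writing $h(\eta, \delta) := \PBM(\Osc(W,\delta) > \eta)$, one has $h(\eta, \delta) \to 0$ as $\delta \downarrow 0$ for each fixed $\eta > 0$ (for instance from L\'evy's modulus estimate $\Osc(W,\delta) = O(\sqrt{\delta\log(1/\delta)})$ a.s.). I then choose $\eta(\delta) \downarrow 0$ slowly enough that also $h(\eta(\delta), \delta)\to 0$, and set $\rho(\delta) := \eta(\delta) + h(\eta(\delta), \delta)$, so $\rho(\delta)\to 0$.

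With $\eps := \rho(\delta) + \delta'$, splitting on $E$ yields
\begin{equation*}
\Pp(\Sd(W^*, W) > \eps) \;\le\; \Pp(E^c) + \PBM(\Osc(W,\delta) > \eta(\delta)) \;\le\; \delta' + h(\eta(\delta), \delta) \;\le\; \eps,
\end{equation*}
and the Strassen bound then gives $\dP(\mathrm{Law}(W^*), \PBM) \le \rho(\delta)+\delta'$. The only subtlety worth noting (not a real obstacle) is that the joint law of $(\sigma, W)$ is left unspecified, but this is harmless because the event $\{\Osc(W,\delta)>\eta\}$ depends only on $W$, whose marginal is Wiener measure; nothing beyond a standard modulus-of-continuity estimate is needed.
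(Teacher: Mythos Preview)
Your argument is correct and follows essentially the same route as the paper: bound $\Sd(W^*,W)\le \Osc(W,\delta)$ on the good event via $\lambda=\mathrm{id}$, control $\Osc(W,\delta)$ by a function $\rho(\delta)\to 0$, and convert the resulting coupling bound into a Prokhorov estimate. The only cosmetic differences are that the paper chooses $\rho$ directly via $P(\Osc(W,\delta)\ge\rho(\delta))<\rho(\delta)$ rather than your two-step $\eta+h$ construction, and it verifies the Prokhorov inequality by hand on sets $A$ instead of citing the (easy direction of the) Strassen coupling criterion.
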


\begin{proof}
Suppose that $W, W^*$ and $\sigma$ are defined on the sample space with a 
probability measure $P$.
It is 
easy to see that we can choose $\rho(\delta)$ so that 
$\lim_{\delta\downarrow 0} \rho(\delta) = 0$
and $P(\Osc(W,\delta) \geq\rho(\delta) )<\rho(\delta)$. 
Suppose that the event 
$F := \{\Osc(W,\delta) <\rho(\delta)\}\cap \{ \forall t\in[0,1]: t-\sigma_t \in [0,\delta]\}$ holds. 
Then taking $\lambda(t) = t $,
\begin{align*}
\Sd(W, W^*) &\le \max\Big( \sup_{t\in[0,1]} |\lambda(t) - t|,
\sup_{t\in[0,1]} |W(\lambda(t)) - W^*(t)| \Big) \\
&= \sup_{t\in[0,1]} |W(t) - W(\sigma(t))| \le \Osc(W, \delta) < \rho(\delta).
\end{align*}
We see that if $F$ holds and $W \in A \subset \calD$ then 
$W^*(\,\cdot\,)\in \calB( A,\rho(\delta))$.
Since $P(F^c) \le \rho(\delta) + \delta'$, we obtain
\begin{align*}
P&(W \in A) \\
&\leq P(\{W\in A\} \cap F) + P(F^c) 
\leq P(\{W^*\in \calB( A,\rho(\delta))\} \cap F) 
+\rho(\delta) + \delta'\\
&\leq P(W^*\in \calB( A,\rho(\delta))) 
+\rho(\delta) + \delta'.
\end{align*}
Similarly we have
$P(W^*\in A ) \le P(W\in \calB( A,\rho(\delta)) ) + \rho(\delta) + \delta'$, and
the lemma follows.
\end{proof}

\begin{lemma}\label{ma26.5}
Suppose that for some processes $X, Y$ and $Z$ on the interval $[0,1]$ we have $Z= X+Y$ and $P(\sup_{0\leq t \leq 1} |X_t| \leq \delta) \geq 1-\delta$. 
Then $\dP(\{Z_t, t\in[0,1]\}, \{Y_t, t\in[0,1]\}) \le \delta$.
\end{lemma}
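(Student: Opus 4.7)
The plan is to use the identity map as the time-reparameterization $\lambda$ in the Skorokhod metric, and then apply the standard coupling argument for bounding the Prokhorov distance.

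First I would introduce the good event $F = \{\sup_{0 \le t \le 1} |X_t| \le \delta\}$, which by hypothesis satisfies $P(F) \ge 1 - \delta$. On $F$, the paths $Y$ and $Z = X + Y$ are uniformly close, since $\sup_t |Z_t - Y_t| = \sup_t |X_t| \le \delta$. Taking $\lambda(t) = t$ in the definition of $\Sd$, this immediately gives $\Sd(Z, Y) \le \delta$ on the event $F$. Consequently, on $F$, whenever $Z \in A$ for some Borel set $A \subset \calD$, we have $Y \in \calB(A, \delta)$, and symmetrically.

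Next I would translate this pathwise comparison into a bound on the Prokhorov distance. For any Borel set $A \subset \calD$,
\begin{align*}
P(Z \in A) &\le P(\{Z \in A\} \cap F) + P(F^c) \\
&\le P(Y \in \calB(A, \delta)) + \delta,
\end{align*}
and the symmetric inequality $P(Y \in A) \le P(Z \in \calB(A,\delta)) + \delta$ follows in exactly the same way by reversing the roles of $Y$ and $Z$. Taking the infimum in the definition of $\dP$ then yields $\dP(Z, Y) \le \delta$, completing the proof.

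There is essentially no obstacle here; this is a soft comparison lemma whose content is that uniform closeness of paths with high probability implies closeness in the Prokhorov metric, and the only thing to check is that the identity reparameterization works in the Skorokhod metric, which is automatic since the pathwise distance we control is the supremum norm.
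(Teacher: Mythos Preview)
Your proposal is correct and essentially identical to the paper's proof: both define the good event $F$, use the identity reparametrization to get $\Sd(Z,Y)\le\delta$ on $F$, and then run the standard two-sided bound on $P(Z\in A)$ and $P(Y\in A)$ to conclude $\dP(Z,Y)\le\delta$.
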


\begin{proof}
Suppose that the event 
$F := \{\sup_{0\leq t \leq 1} |X_t| \leq \delta\}$ holds. 
Then taking $\lambda(t) = t $,
\begin{align*}
\Sd(Z,Y) &\le \max\Big( \sup_{t\in[0,1]} |\lambda(t) - t|,
\sup_{t\in[0,1]} |Z(\lambda(t)) - Y(t)| \Big) \\
&= \sup_{t\in[0,1]} |Z(t) - Y(t)| \le \delta.
\end{align*}
We see that if $F$ holds and $Z \in A \subset \calD$ then 
$Y(\,\cdot\,)\in \calB( A,\delta)$.
Since $P(F^c) \le \delta$, we obtain
\begin{align*}
P(Z \in A) &\leq P(\{Z\in A\} \cap F) + P(F^c) 
\leq P(\{Y\in \calB( A,\delta)\} \cap F) 
+ \delta\\
&\leq P(Y\in \calB( A,\delta)) 
+ \delta.
\end{align*}
Similarly we have
$P(Y\in A ) \le P(Z\in \calB( A,\delta) ) +  \delta$, and
the lemma follows.
\end{proof}

Recall that the function $e\to \mu^n_e$ is periodic with period $a_n$.
Hence the random field $\{\mu^n_e\}_{e\in E_2}$ takes only finitely many values --
this is a much stronger statement than the fact that $\mu^n_e$
takes only finitely many values.

By Theorem \ref{T:eK} for each $n \ge 1$,
$$ \lim_{a\to \infty} 
\dP( \{  (1/a)X^{(n)}_{ta^2}, t\in[ 0,1]\}, \PBM) =0. $$ 
Thus \assmp we can take $a_{n+1}$ so large that for every $\omega$, $n \ge 1$
and $a\geq a_{n+1}$, 
\be \label{e:PdistBM} 
\dP_\omega( \{(1/a)X^{(n)}_{ta^2}, t\in[ 0,1]\}, \PBM) \le 2^{-n}.
\ee

\bigskip

Let $\theta$ denote the usual shift operator for Markov processes, that is, $X^{(n)}_t \circ \theta_s = X^{(n)}_{t+s}$ for all $s,t\geq 0$ (we can and do assume that $X^{(n)}$ is the canonical process on an appropriate probability space). 
Recall that
$\ball(x,r) =\{y: ||x-y||_\infty \le r\}$ denote balls in the $\ell^\infty$ norm
in $\bZ^2$ (i.e. squares), $a_n' = a_n/2$, $B_n=[0,a_n]^2$ and $u_n =(a_n', a'_n)$. Note that $u_n$ is 
the center of $B_n$.
We choose $ \beta_n$  so that
\begin{align}\label{j2.10}
 b_n n^{1/8}
< \beta_n \leq \lfloor b_n n^{1/4}\rfloor  < 2 \beta_n  < a_n/10,
\end{align}
and we assume that $n$ is large enough so that the above inequalities hold.
Let $\sC_n =\{ u_n + \sO_n+ a_n \bZ^2\}$ be the set of centers of the squares in $\sS_n$, and let
\be \label{e:Hdef}
  \sK(r) = \bigcup_{z \in \sC_n} \ball(z,r).
\ee
Now let
\begin{align*}
\Gamma^1_n &= \sK(2\beta_n), \\
\Gamma^2_n &= \Z^2 \setminus \sK(4 \beta_n).
\end{align*}
Now define stopping times as follows.
\begin{align*}
S^n_0 &= T^n_0 = 0,\\
U^n_k & = \inf\{t\geq S^n_{k-1}: X^{(n)}_t \in \Gamma^2_n\}, \qquad k \geq 1,\\
S^n_k & = \inf\{t \geq U^n_k: X^{(n)}_t \in \Gamma^1_n\}, \qquad k \geq 1,\\
V^n_1 & = \inf \Big\{t \in \bigcup_{k\geq 1} [U^n_k, S^n_k]: 
X^{(n)}_t \in X^{(n)}(T^n_0) + a_{n-1} \bZ^2 \Big\}, \\
T^n_k & = \inf\{t\geq V^n_{k}: X^{(n)}_t \in \Gamma^1_n\}, \qquad k \geq 1,\\
V^n_k & = V^n_{1} \circ \theta_{T^n_{k-1}} , \qquad k \geq 2.
\end{align*}
Let
$$ J= \bigcup_{k=1}^\infty [V^n_k, T^n_k]; $$
for $t \in J$  the process $X^{(n)}$ is a distance at least
$\beta_n$ away from any $n$th level obstacle.
Now set for $t\geq 0$,
\begin{align*}
\sigma^{n,1}_t &= \int_0^t \bone_J(s) ds = \sum_{k=1}^\infty \left(T^n_{k} \land t - V^n_{k} \land t\right),\\
\sigma^{n,2}_t &= t-\sigma^{n,1}_t = \sum_{k=0}^\infty \left(V^n_{k+1} \land t - T^n_{k} \land t\right).
\end{align*}
Let $\wh \sigma^{n,j}$ denote the right continuous inverses of these processes, given by
\bes
\wh \sigma^{n,j}_t = \inf\{s\geq 0: \sigma^{n,j}_s \geq t\}, \, j=1,2.
\ees
Finally let 
\begin{align*}
X^{n,1}_t &= X^{(n)}_0 + \int_0^t \bone_J(s) dX^{(n)}_s  \\
 &=  X^{(n)}_0 + \sum_{k=0}^\infty
\left(X^{(n)}(T^n_k \land t) - X^{(n)}(V^n_{k} \land t)\right), \\
\wh X^{n,1}_t &=  X^{(n)}_0 + X^{n,1}(\wh \sigma^{n,1}_t), \\
X^{n,2}_t &=  X^{(n)}_0 + \int_0^t \bone_{J^c}(s) dX^{(n)}_s  \\
 &=  X^{(n)}_0 + \sum_{k=0}^\infty
\left(X^{(n)}(V^n_{k+1} \land t) - X^{(n)}(T^n_{k} \land t)\right), \\
\wh X^{n,2}_t &=  X^{(n)}_0 + X^{n,2}(\wh \sigma^{n,2}_t).
\end{align*}

The point of this construction is the following.
For every fixed $\omega$, the function $e\to\mu_e^{n-1}$ is invariant under the shift by 
$x a_{n-1}$ for any $x\in \Z^2$, and 
$X^{(n)}(V^n_{k+1} ) = X^{(n)}(T^n_{k}) + x a_{n-1} $ for some $x\in \Z^2$. 
It follows that for each $\omega\in \Omega$, we have the following equality of  distributions: 
\be \label{e:Xhatdsn}
\{\wh X^{n,1}_t, t\geq 0\} \eqd \{X^{(n-1)}_t, t\geq 0\}.
\ee
The basic idea of the argument which follows is to write $X^{(n)}= X^{n,1} + X^{n.2}$.
By Theorem \ref{T:eK}, or more precisely by \eqref{e:PdistBM}, the process $X^{n,1}$ is close
to Brownian motion, so to prove Theorem \ref{T:main} we need to prove that $X^{n,2}$
is small.

\bigskip
We state the next lemma at a level of generality greater than what we need in this article. A variant of our lemma is in the book \cite{AF} but we could not find a statement that would match perfectly our needs.
Consider a finite graph $G=(\calV,E)$ and suppose that for any edge $\ol{xy}$, $\mu_{xy}$ is a non-negative real number. Assume that $\sum_{y\sim x} \mu_{xy} >0$ for all $x$.
For $f: \calV \to \bR$ set
$$ \sE (f,f) = \sum_{\{x,y\} \in E}  \mu_{xy} (f(y)-f(x))^2. $$
Suppose that $A_1, A_2 \subset \calV$, $A_1 \cap A_2 = \emptyset$, and
let 
\begin{align*}
\sH &=\{ f:\calV \to \bR \text{ such that } f(x)=0 \text{ for } x\in A_1, f(y)=1 \text{  for } y \in A_2\},\\
 \res^{-1} &= \inf\{ \sE(f,f): f \in \sH \}. 
\end{align*}
Thus $\res$ is the effective resistance between $A_1$ and $A_2$.
Let $Z$ be the continuous time Markov process on $\calV$ with the generator $\calL$  given by
\begin{align}\label{ma21.1}
\calL f(x) = \sum_y \mu_{xy} (f(y) - f(x)).
\end{align}
Let $T_i = \inf\{t\geq 0: Z_t \in A_i\}$  for $i=1,2$,
 and let $Z^{(i)}$ be $Z$ killed at time $T_i$.

\begin{lemma}\label{L:com}
There exist probability measures $\nu_1$ on $A_1$ and $\nu_2$ on $A_2$ such that
\begin{align*}
E^{\nu_2} T_1 + E^{\nu_1} T_2 = \res |\calV| . 
\end{align*}
Moreover, for $i=1,2$, $\nu_i$ is the capacitary measure of $A_i$ for the process $Z^{(3-i)}$.
\end{lemma}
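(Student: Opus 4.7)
The plan is to prove this by the standard commute--time calculation for reversible processes, adapted to two sets rather than two points and using that the invariant measure of the VSRW is counting measure. Let $h(x)=\Pp^x(T_2<T_1)$ be the equilibrium potential, so $h=0$ on $A_1$, $h=1$ on $A_2$, and $h$ is $\calL$--harmonic on $\calV\setminus(A_1\cup A_2)$. Standard Dirichlet--principle arguments give $h\in\sH$ and $\sE(h,h)=\res^{-1}$. Since $\mu_{xy}=\mu_{yx}$, counting measure is reversible for $Z$, and hence for the killed processes $Z^{(1)}$ and $Z^{(2)}$ as well, so the corresponding Green's functions $G_1$ and $G_2$ are symmetric.

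Next I would introduce the capacitary measures. Define $\nu_1$ on $A_1$ by $\nu_1(\{x\})=\res\,\calL h(x)$ and $\nu_2$ on $A_2$ by $\nu_2(\{y\})=-\res\,\calL h(y)$; on $\calV\setminus(A_1\cup A_2)$ harmonicity gives $\calL h=0$, on $A_1$ one has $\calL h(x)=\sum_y \mu_{xy}h(y)\ge 0$, and on $A_2$ one has $\calL h(y)\le 0$, so both $\nu_i$ are non--negative. Summation by parts against $h$ gives
\begin{equation*}
\sE(h,h)=-\sum_x h(x)\calL h(x)=-\sum_{y\in A_2}\calL h(y),
\end{equation*}
and $\sum_x \calL h(x)=0$ yields $\sum_{x\in A_1}\calL h(x)=-\sum_{y\in A_2}\calL h(y)=\res^{-1}$, so $\nu_1$ and $\nu_2$ are probability measures. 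The identification of $\nu_1$ with the capacitary measure of $A_1$ for $Z^{(2)}$ is then just the statement that $(1-h)(\cdot)=G_2(\res^{-1}\nu_1)(\cdot)$, which is a rewriting of the fact that $1-h$ solves the dual Dirichlet problem with boundary datum on $A_1$; the claim for $\nu_2$ is symmetric.

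The key computation expresses hitting times as integrals against the Green's function: for any $x$,
\begin{equation*}
E^x T_2 = E^x \int_0^{T_2} 1\,dt = \sum_y G_2(x,y).
\end{equation*}
Averaging over $\nu_1$ and using symmetry of $G_2$,
\begin{equation*}
E^{\nu_1} T_2=\sum_y\sum_{x\in A_1}\nu_1(x)G_2(y,x)=\res\sum_y (1-h(y)),
\end{equation*}
where the last equality uses $\sum_{x\in A_1}\calL h(x)\,G_2(y,x)=1-h(y)$, i.e.\ the capacitary--potential identity applied to the killed process $Z^{(2)}$ (both sides are $\calL$--harmonic on $\calV\setminus A_1$, vanish on $A_2$, and equal $1$ on $A_1$). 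The same argument with the roles of $A_1,A_2$ swapped and $h$ replaced by $1-h$ gives $E^{\nu_2}T_1=\res\sum_y h(y)$.

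Adding the two identities,
\begin{equation*}
E^{\nu_2}T_1+E^{\nu_1}T_2=\res\sum_y\bigl(h(y)+(1-h(y))\bigr)=\res\,|\calV|,
\end{equation*}
which is the desired formula. The proof is essentially bookkeeping once the right objects are in place; the only point that requires care is verifying the capacitary--potential identity $G_2(\res^{-1}\nu_1)=1-h$ (and its analogue for $\nu_2$), which I expect to be the main obstacle in making the argument completely rigorous in the notation of \cite{AF} and ensuring that $\nu_i$ as defined really does match the capacitary measure of $A_i$ for $Z^{(3-i)}$.
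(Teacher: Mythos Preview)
Your proposal is correct and follows essentially the same approach as the paper. Both arguments hinge on the capacitary--potential identity $h_{12}=G_2 e_{12}$ (in the paper's notation; your $1-h=G_2(\res^{-1}\nu_1)$), the symmetry of the killed Green's function, the computation $\sum_y h_{12}(y)=\res^{-1}E^{\nu_1}T_2$ (and its symmetric counterpart), and the observation $h_{12}+h_{21}=1$. The only cosmetic differences are that the paper works with $h_{12}(x)=P^x(T_1<T_2)=1-h(x)$ rather than your $h$, and simply names $e_{12}$ as the capacitary measure and quotes its properties, whereas you construct $\nu_i$ explicitly via $\res\,\calL h$ on $A_i$ and verify directly that it is a probability measure; your $\nu_1=\res\,\calL h|_{A_1}$ coincides with the paper's $\res\,e_{12}$.
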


\begin{proof}
Let $h_{12}(x) = P^x( T_1 < T_2)$. 
Set $D= \calV-A_1$ and recall that $Z^{(i)}$ is $Z$ killed at time $T_i$.
Let $G_2$ be the Green operator for $Z^{(2)}$, and $g_2(x,y)$ be the density of
$G_2$ with respect to counting measure, so that
$$ E^x T_2 = \sum_{y \in \calV} g_2(x,y). $$
Note that $g_2(x,y)=g_2(y,x)$.
Let $e_{12}$ be the capacitary measure of $A_1$ for the process $Z^{(2)}$. Then
$\res^{-1} = \sum_{z \in A_1} e_{12}(z), $ and
$$ h_{12}(x) =  \sum_{z \in A_1} e_{12}(z) g_2(z,x) . $$
So, if $ \nu_1 = \res e_{12} $, then
\begin{align*}
\sum_{y \in \calV} h_{12}(y) &= 
   \sum_{y \in \calV}  \sum_{x \in A_1} e_{12}(x) g_2(x,y)  \\
&= \res^{-1} \sum_{x \in A_1} \nu_1(x)  \sum_{y \in \calV} g_2(x,y)  \\
&= \res^{-1} \sum_{x \in A_1} \nu_1(x) E^x T_1 = \res^{-1} E^{\nu_1} T_2.
\end{align*}
Similarly if $h_{21}(x) =\bP^x( T_2 < T_1)$ we obtain
$\res^{-1} E^{\nu_2} T_1 = \sum_{y \in \calV} h_{21}(y) $, and since $h_{12}+h_{21}=1$, adding these
equalities proves the lemma.
\end{proof}

\section{ Estimates on the process $X^{n,2}$ } 

In this section we  will prove 

\begin{proposition}\label{d22.2}
For every $\delta>0$ there exists $n_1$ such that for all $n\geq n_1$,  $u\geq a_n^2$, and $\omega$ such that $0 \notin \Gamma^1_n \setminus \prt_i \Gamma^1_n$, 
\begin{align}\label{d22.3}
P^0_\omega
\left(  \sigma^{n,2}_u / u \le \delta, \sup_{0\le s \le u} u^{-1/2} |X^{n,2}_s| \le \delta \right) \geq 1-\delta.
\end{align}
\end{proposition}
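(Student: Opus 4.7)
The plan is to decompose $[0,u]$ into successive cycles $[T^n_k, T^n_{k+1}]$, each built from an obstacle phase $[T^n_k, V^n_{k+1}]$ (where $\sigma^{n,2}$ and $X^{n,2}$ accumulate) followed by a free phase $[V^n_{k+1}, T^n_{k+1}]$, and to control the totals by estimating a single cycle and summing over the random number $N_u$ of cycles completed by time $u$. The two main per-cycle ingredients are the expected full-cycle length $\bE[T^n_{k+1}-T^n_k]$ and the expected obstacle-phase length $\bE\,\tau^{\text{obs}}_k$, where $\tau^{\text{obs}}_k := V^n_{k+1}-T^n_k$; both will come from the commute-time identity of Lemma~\ref{L:com} applied inside a single $n$th level tile $Q$, and the near-Gaussian heat-kernel bounds of Lemma~\ref{L:hkXn} then give $N_u$.

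By the $a_n$-periodicity of $\mu^n$ we may work in a single tile $Q$. Let $A_1 = \Gamma^1_n \cap Q$ and $A_2 = \partial Q$; by standard 2D effective-resistance estimates (the I-shaped obstacles sit inside $\sK(2\beta_n)$ and only perturb $\res$ by a bounded multiplicative factor), $\res(A_1, A_2) \asymp \log(a_n/\beta_n)$, and Lemma~\ref{L:com} gives $\bE[T^n_{k+1}-T^n_k] \asymp a_n^2\log(a_n/\beta_n)$. Replacing $A_2$ by $A_2' := (X^{(n)}_{T^n_k}+a_{n-1}\bZ^2)\cap Q\cap\sK(4\beta_n)^c$, which has density $\sim a_{n-1}^{-2}$, yields $\res(A_1,A_2')\lesssim \log a_n$, so $\bE\,\tau^{\text{obs}}_k \lesssim \beta_n^2 + a_{n-1}^2\log a_n$ (the $\beta_n^2$ being the escape cost across $\sK(4\beta_n)$). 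Since the process diffuses at unit speed in the bulk of the tile, $\bE|\Delta_k|^2 \lesssim \bE\,\tau^{\text{obs}}_k$, where $\Delta_k := X^{(n)}_{V^n_{k+1}}-X^{(n)}_{T^n_k} \in a_{n-1}\bZ^2$. Crucially, the reflection symmetries $R_1, R_2$ of $\nu^{n,0}$ within $Q$, together with the corresponding symmetry of the entry law on $\partial\Gamma^1_n$, give $\bE[\Delta_k\mid \calF_{T^n_k}] \approx 0$.

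Summing over cycles, $N_u \lesssim u/(a_n^2\log(a_n/\beta_n))$ with high probability, by the ergodic/mixing consequences of Lemma~\ref{L:hkXn}. Then
\[
\bE\,\sigma^{n,2}_u \;\lesssim\; N_u\cdot\bE\,\tau^{\text{obs}}_k \;\lesssim\; u\cdot\frac{\beta_n^2+a_{n-1}^2\log a_n}{a_n^2\log(a_n/\beta_n)} \;=\; o(u),
\]
by assumptions (iv), (vi), (viii) (which give $\beta_n^2/a_n^2 \lesssim n^{-1/2}$ and $a_{n-1}/a_n \lesssim n^{-1/2}$). For the displacement, write $X^{n,2}_t = \sum_{k\le N_t}\Delta_k$; the approximate mean-zero property and regenerative independence of the $\Delta_k$ allow Doob's $L^2$ inequality to give
\[
\bE\sup_{s\le u}|X^{n,2}_s|^2 \;\lesssim\; N_u\cdot\bE|\Delta_k|^2 \;\lesssim\; u\cdot\frac{\beta_n^2+a_{n-1}^2\log a_n}{a_n^2\log(a_n/\beta_n)} \;=\; o(u),
\]
so that $u^{-1/2}\sup_{s\le u}|X^{n,2}_s| \to 0$ in $L^2$. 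Chebyshev's inequality applied to each of the two events then yields \eqref{d22.3} for $n$ sufficiently large.

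The main obstacle will be the effective-resistance estimate $\res(A_1, A_2') \lesssim \log a_n$ in the presence of the I-shaped obstacles with exceptional conductances $\eta_n$ and $K_n$; the saving grace is that the obstacles lie strictly inside $\Gamma^1_n$, so once a current flows out of $A_1$ it enters a region where all conductances equal $1$, and the obstacles affect only the boundary behaviour at $\partial A_1$, a perturbation that can be absorbed by flow-energy monotonicity. A secondary point is the mean-zero property of $\Delta_k$ uniformly over starting positions on $\partial\Gamma^1_n$, which relies on the full tile symmetry of $\nu^{n,0}$ together with the symmetric definition of the stopping rules $T^n_k$ and $V^n_{k+1}$.
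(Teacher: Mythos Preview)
Your proposal has a fundamental misreading of the cycle structure that invalidates the main estimates. You treat the cycle $[T^n_k,T^n_{k+1}]$ as if it involved a traverse of the whole tile $Q$ (you take $A_2=\partial Q$ and claim $E[T^n_{k+1}-T^n_k]\asymp a_n^2\log(a_n/\beta_n)$ via Lemma~\ref{L:com}). But by definition $T^n_{k+1}$ is merely the next hitting time of $\Gamma^1_n=\sK(2\beta_n)$ after $V^n_{k+1}$, and $V^n_{k+1}$ lies in $\Gamma^2_n=\bZ^2\setminus\sK(4\beta_n)$, i.e.\ just outside the $4\beta_n$--ball around the same center. Most cycles are local oscillations across an annulus of scale $\beta_n$, not traverses of the tile; the commute time between $\Gamma^1_n\cap Q$ and $\partial Q$ has nothing to do with $E[T^n_{k+1}-T^n_k]$. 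The paper in fact shows (Lemma~\ref{L:sigma_1}) that the obstacle phase has mean $\lesssim b_n^2 n^{1/2}\asymp\beta_n^2$ while the free phase is only longer by a factor $\asymp n^{1/4}/\log n$; consequently the true number $N_u$ of cycles is of order $u/\beta_n^2$ (up to the $\delta^3$ coming from the definition of $m_1$), vastly larger than your $u/(a_n^2\log(a_n/\beta_n))$. With the wrong $N_u$, your bound $E\sigma^{n,2}_u\lesssim N_u\cdot E\tau^{\text{obs}}_k$ is an underestimate, not an overestimate, and the argument collapses.

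Two further gaps would remain even after fixing the scales. First, the increments $\Delta_k=Y^n_k$ are \emph{not} independent: the law of $\Delta_k$ depends on the entry point $\Pi_n(X^{(n)}_{T^n_k})\in\partial_i\Gamma^1_n$, which is itself determined by the previous cycle. The paper handles this by proving exponential decorrelation of the $(Y^n_k)$ via the Harnack inequality (Lemma~\ref{L:covY}), giving $\Var(\sum_{k\le m}Y^n_{k,1})\lesssim m\beta_n^2$ without any independence. Second, your mean-zero claim $E[\Delta_k\mid\calF_{T^n_k}]\approx 0$ by tile symmetry fails because the entry point on $\partial_i\Gamma^1_n$ is generically not a fixed point of the reflections $R_1,R_2$; the paper instead shows $E^{\std(n)}_\omega Y^n_{k,1}=0$ for the \emph{stationary} distribution $\std(n)$ of the projected chain $\{\sX^{(n)}_k(T^n_k)\}$ (using that both $X^{(n)}$ and $X^{(n-1)}$ satisfy the QFCLT), and then controls the defect $|E^0_\omega Y^n_{k,1}|\le c e^{-ck}\beta_n$ by mixing. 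Without these two ingredients, Doob's inequality is unavailable and the maximal bound on $\sup_s|X^{n,2}_s|$ does not follow.
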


The proof requires a number of steps. We begin with a Harnack inequality.

\begin{lemma}\label{L:harn}
Let $1 \le \lam \le 10$. 
There exist $p_1>0$ 
 and $n_1 \ge 1$ with the following properties. \\
(a) Let $x \in \bZ^2$, let  $B_1= \ball(x, \lam \beta_n)$ 
and $B_2= \ball(x, (2/3) \lam \beta_n)$. 
Let $F$ be  the event that $X^{(n)}$ makes a closed loop around $B_2$
inside $B_1 - B_2$
before its first exit from $B_1$.
If $n \ge n_1$ and $D_n \cap B_1 = \emptyset$ then
$P^y_\om(F) \ge p_1$ for all $ y \in B_2$. \\
(b) Let $h$ be harmonic in $B_1$.  
Then 
\be \label{e:harni}
  \max_{B_2} h \le p_1^{-1} \min_{B_2} h.
\ee
\end{lemma}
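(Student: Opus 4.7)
Since $D_n \cap B_1 = \emptyset$, no $n$-th level obstacle touches $B_1$, so $\nu^n_e = 1$ for every $e \in E(B_1)$ and the inductive definition gives $\mu^n_e = \mu^{n-1}_e$ on $E(B_1)$. Consequently, under $P^y_\omega$ the law of $X^{(n)}$ killed on exit from $B_1$ coincides with that of $X^{(n-1)}$ killed on exit from $B_1$, and all claims reduce to statements about the process $X^{(n-1)}$. The strategy is to combine the Gaussian heat-kernel bounds for $X^{(n-1)}$ with a standard chaining argument.

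\smallskip

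Using the freedom \assmp we first arrange that $\beta_n$ is large enough that the Gaussian estimate \eqref{e:GB1} of Lemma \ref{L:hkXn} applies to $X^{(n-1)}$ at scale $a = \beta_n$, i.e.\ $\beta_n \ge M_{n-1}(\delta_0, T_0)$ for fixed constants $\delta_0, T_0$ chosen once and for all. (Note that \eqref{e:PdistBM} cannot be invoked directly, because $\beta_n \le a_n / n^{1/4}$ is much smaller than $a_n$; the point is that \eqref{e:GB1} is a local estimate that applies at any scale $a \ge M_{n-1}$.) Standard consequences of the two-sided Gaussian bounds then follow by routine arguments: for $y \in B_2$ the exit time $\tau_{B_1}$ of $B_1$ satisfies $c_1 \beta_n^2 \le E^y_\omega \tau_{B_1} \le c_2 \beta_n^2$; and for any two disks $D, D' \subset B_1$ of radius comparable to $\beta_n$ that are separated from $\partial B_1$, the walk started in $D$ hits $D'$ before $\tau_{B_1}$ with probability at least a universal constant.

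\smallskip

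For part (a), I would carry out a standard chaining argument. Place $K$ points $z_1,\dots, z_K$ (with $K$ a universal constant) equally spaced on the circle $\{w: |w-x|_\infty = \tfrac{5}{6}\lambda\beta_n\}$, and let $D_i = \ball(z_i, \tfrac{1}{20}\lambda\beta_n)$, so that the disks $D_i$ are contained in $B_1 \setminus \ball(x, \tfrac{3}{4}\lambda\beta_n)$ and consecutive disks overlap, their union containing a closed curve around $B_2$. Using the hitting estimates of the previous paragraph and the strong Markov property, one shows that from any $y \in B_2$ the walk visits $D_1, D_2, \dots, D_K, D_1$ in cyclic order without exiting $B_1$ or entering $\ball(x, \tfrac{3}{4}\lambda\beta_n)$, with probability at least some $p_1 > 0$ independent of $n, \omega, y$; on this event the trace of the walk contains a closed loop around $B_2$ inside $B_1 \setminus B_2$.

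\smallskip

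For part (b), I would derive the elliptic Harnack inequality from (a) in the usual way. For $h \ge 0$ harmonic in $B_1$ and any $y_1, y_2 \in B_2$, optional stopping applied to the bounded martingale $h(X^{(n)}_{t \wedge \tau_{B_1}})$ at the hitting time $T_{y_2}$ gives
\[
h(y_1) \;\ge\; P^{y_1}_\omega(T_{y_2} < \tau_{B_1})\, h(y_2).
\]
A lower bound $P^{y_1}_\omega(T_{y_2} < \tau_{B_1}) \ge c$ follows from combining the loop event of (a) (which, together with the hitting estimate above, guarantees the walk visits a small neighbourhood of $y_2$ before exiting $B_1$) with a pointwise heat-kernel lower bound to hit $y_2$ itself. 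Taking $y_1, y_2$ to attain the min and max of $h$ on $B_2$ and then (after possibly shrinking $p_1$) relabelling the constants yields \eqref{e:harni}.

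\smallskip

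The main obstacle is the first one: ensuring the Gaussian estimates of Lemma \ref{L:hkXn} can be applied at the sub-Brownian scale $\beta_n$, and that the resulting constants in the chaining argument are genuinely absolute (independent of $n$ and $\omega$). Once this is arranged via \assmp, the chaining in (a) and the martingale argument in (b) are routine.
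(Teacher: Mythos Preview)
Your approach to (a) is essentially the paper's: both reduce to $X^{(n-1)}$ (since $D_n \cap B_1 = \emptyset$ forces $\mu^n = \mu^{n-1}$ on $E(B_1)$) and then invoke a Brownian approximation at scale $\beta_n$, using \assmp to ensure $\beta_n$ is large enough relative to the level-$(n-1)$ environment. The paper states this in one line, while you spell out a chaining construction; your remark that \eqref{e:PdistBM} as literally written is at scale $\ge a_n$ rather than $\beta_n$ is correct, and both proofs resolve it in the same way, via \assmp.

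Part (b) is where the approaches diverge, and the paper's route is both shorter and explains why (a) is phrased in terms of a loop event. The paper argues as follows. Let $y \in B_2$ attain $\max_{B_2} h$. By the maximum principle for the nonnegative harmonic function $h$, there is a connected lattice path $\gamma$ from $y$ to $\partial_i B_1$ along which $h \ge h(y)$. For any $y' \in B_2$, on the event $F$ the loop traced by $X^{(n)}$ lies in $B_1 \setminus B_2$ and surrounds $B_2$, so it must intersect $\gamma$ (which runs from $B_2$ to $\partial_i B_1$); thus $T_\gamma < \tau_{B_1}$ on $F$. Optional stopping at $T_\gamma \wedge \tau_{B_1}$ then gives
\[
h(y') \;\ge\; E^{y'}_\omega\!\big[h(X^{(n)}_{T_\gamma})\,\bone_F\big] \;\ge\; P^{y'}_\omega(F)\,\min_\gamma h \;\ge\; p_1\,h(y),
\]
which is \eqref{e:harni}. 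No pointwise hitting estimate is needed.

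Your route for (b)---bounding $P^{y_1}_\omega(T_{y_2} < \tau_{B_1})$ from below via heat-kernel lower bounds---can be made to work in $d=2$, but note that the loop event of (a) does not help you reach a neighbourhood of $y_2 \in B_2$: the loop lies in $B_1 \setminus B_2$ by definition, so that part of your sketch is misplaced. What you would actually use is the disk-to-disk hitting estimate plus a Green-function lower bound to hit the single lattice point $y_2$. The paper's argument avoids this entirely by hitting the set $\gamma$ instead of a point, and this topological use of the loop is precisely why (a) is stated the way it is.
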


\begin{proof}
(a) Using \assmp and \eqref{e:PdistBM} we can make a Brownian approximation to
$\beta_n^{-1} X^{(n)}_\cdot$ which is good enough so that this estimate holds.\\
(b) Let $y \in B_1$ be such that $h(y) = \max_{z \in B_2} h(z)$.
Then by the maximum principle there exists a connected path $\gamma$ from $y$
to $\pd_i B_1$ with $h(w) \ge h(y)$ for all $w \in \gamma$.
Now let $y'\in B_2$.  On the event $F$ the process $X^{(n)}$ must hit $\gamma$, and
so we have
$$ h(y') \ge P^{y'}_\om (F) \min_{\gamma} h \ge p_1 h(y),$$
proving \eqref{e:harni}. 
\end{proof}

\begin{lemma}\label{12a29lem}
For some $n_1$ and $c_1$, for all $n\geq n_1$, $k\geq 1$, and $\omega$ such that 
$0 \notin \Gamma^1_n \setminus \prt_i \Gamma^1_n$, 
\begin{align}\label{z1.1}
E^0_\omega(U^n_k - S^n_{k-1}  \mid \calF_{S^n_{k-1}} ) \leq c_1 \beta_n^2.
\end{align}
\end{lemma}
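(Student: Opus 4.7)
The plan is to rewrite $U^n_k - S^n_{k-1}$ as a single exit time of $X^{(n)}$ from a ball of radius $4\beta_n$ around one obstacle centre, and then to control that exit time using the Gaussian upper heat-kernel estimate of Lemma~\ref{L:hkXn} applied at scale $a=\beta_n$.

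First I would carry out the reduction. By the strong Markov property at $S^n_{k-1}$,
$$ E^0_\omega\bigl(U^n_k - S^n_{k-1}\mid\calF_{S^n_{k-1}}\bigr) = E^{Y}_\omega[\tau'],\qquad Y = X^{(n)}(S^n_{k-1})\in\Gamma^1_n,\quad \tau' = \inf\{t\ge 0: X^{(n)}_t\in\Gamma^2_n\}. $$
Because $\Gamma^1_n = \bigcup_{z\in\sC_n}\ball(z,2\beta_n)$ and condition (viii) yields $8\beta_n < a_n$, the balls $\{\ball(w,4\beta_n)\}_{w\in\sC_n}$ are pairwise disjoint, so there is a unique $z\in\sC_n$ with $Y\in\ball(z,2\beta_n)$, and $\tau'$ coincides with the exit time of $X^{(n)}$ from $\ball(z,4\beta_n)$. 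Thus it is enough to show, uniformly in $z\in\sC_n$ and $y\in\ball(z,2\beta_n)$,
$$ E^y_\omega[\tau^z] \le c_1\beta_n^2,\qquad \tau^z:=\inf\{t\ge 0: X^{(n)}_t\notin\ball(z,4\beta_n)\}. $$

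Second, I would fix a large absolute constant $C$ and invoke \assmp to arrange $a_n$ (and hence $\beta_n\ge b_n n^{1/8}$) large enough that $\beta_n\ge M_n(1,C)$. Stationarity of $\mu^n$ lets me intersect the countable family of translates of the full-measure set on which Lemma~\ref{L:hkXn} is valid, yielding the Gaussian upper bound centred at every $z$: for $y,y'\in\ball(z,4\beta_n)$,
$$ p^{\omega,n}_{C\beta_n^2}(y,y') \;\le\; \frac{2\,k_C\bigl((y-z)/\beta_n,(y'-z)/\beta_n\bigr)}{\beta_n^2} \;\le\; \frac{1}{\pi C\beta_n^2}. $$
Summing over the at most $(8\beta_n+1)^2\le 81\beta_n^2$ points $y'\in\ball(z,4\beta_n)$,
$$ P^y_\omega\bigl(X^{(n)}_{C\beta_n^2}\in\ball(z,4\beta_n)\bigr) \;\le\; \frac{81}{\pi C} \;\le\; \tfrac12 $$
once $C$ is chosen large enough. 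Since $\{\tau^z > C\beta_n^2\}\subset\{X^{(n)}_{C\beta_n^2}\in\ball(z,4\beta_n)\}$, iterating this estimate in blocks of length $C\beta_n^2$ via the Markov property at times $jC\beta_n^2$ yields $P^y_\omega(\tau^z > kC\beta_n^2)\le 2^{-k}$, and summation produces $E^y_\omega[\tau^z] \le 2C\beta_n^2$, completing the proof with $c_1 = 2C$.

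The main obstacle will be arranging that the Gaussian estimate of Lemma~\ref{L:hkXn} is actually available at the comparatively fine scale $a=\beta_n$: this is much smaller than the scale $a_{n+1}$ at which the simple Brownian approximation~\eqref{e:PdistBM} is guaranteed, and the constant $M_n$ is explicitly non-effective. This is exactly the role played by \assmp, since $a_n$—and hence the lower bound $b_n n^{1/8}$ on $\beta_n$—is chosen after the environment $\mu^n$ (and therefore after $M_n(1,C)$) has been determined, so we may simply inflate $a_n$ until $\beta_n\ge M_n(1,C)$. The hypothesis $0\notin\Gamma^1_n\setminus\partial_i\Gamma^1_n$ plays only a cosmetic role in ensuring that the reduction above is also clean when $k=1$.
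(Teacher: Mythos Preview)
Your reduction to an exit time from $\ball(z,4\beta_n)$ is fine, but the core step---invoking Lemma~\ref{L:hkXn} for $X^{(n)}$ at scale $a=\beta_n$---is circular. The constant $M_n$ in that lemma depends on the environment $\mu^n$, and $\mu^n$ is built \emph{from} the parameters $a_n,b_n,\beta_n,\eta_n,K_n$: the level-$n$ obstacles sit at distance $\beta_n$ from the centre and carry conductances $K_n$ and $\eta_n$. So your sentence ``$a_n$\ldots is chosen after the environment $\mu^n$ (and therefore after $M_n(1,C)$) has been determined'' is simply false. The \assmp convention in this paper only lets you take $a_n$ or $b_n$ large relative to quantities determined by $\mu^{n-1}$ (cf.\ the paragraph introducing \assmp and the choice around \eqref{e:PdistBM}); it gives you no leverage over $M_n$. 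Since $M_n$ is explicitly non-effective and $K_n$ may be enormous, there is no reason the Gaussian upper bound should hold at scale $\beta_n$.

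This is not a technicality: the ball $\ball(z,4\beta_n)$ contains the level-$n$ obstacle, so inside it $X^{(n)}$ genuinely fails to look Brownian at that scale, and a heat-kernel argument for $X^{(n-1)}$ is unavailable because the two processes differ there. The paper therefore takes a different route. It uses the commute-time identity of Lemma~\ref{L:com} to get $E^{\nu_1}T_2\le \res\,|\calV|\le c\beta_n^2$ from a resistance bound (which is robust under the obstacle since it comes from Thompson's principle applied to the $\mu^{n-1}$ structure), and then upgrades this to a bound from an arbitrary starting point by combining a Brownian-approximation estimate for $X^{(n-1)}$ in the obstacle-free annulus (legitimate under \assmp since $\beta_n\gg a_{n-1}$) with the Harnack inequality of Lemma~\ref{L:harn} to compare hitting distributions on $A_1$ to $\nu_1$.
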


\begin{proof}
Assume that $\omega$ is such that $0 \notin \Gamma^1_n \setminus \prt_i \Gamma^1_n$.
By the strong Markov property applied at $S^n_{k-1}$ for $k >1$, it is enough to prove 
the Lemma for $k=1$, that is that 
$E^x_\omega(U^n_1 ) \le c_1 \beta_n^2$ for all 
$x \notin \Gamma^1_n \setminus \prt_i \Gamma^1_n$.
 Let 
\begin{align*}
\calV &= \ball(u_n + \calO_n, 4 \beta_n+1),\\
A_1 &= \prt_i \ball(u_n + \calO_n, (3/2) \beta_n),\\
A_2 &= \prt_i \calV,\\
A_3 &= \prt_i \ball(u_n + \calO_n, 2 \beta_n)\\
T_i &= \inf\{t\geq 0: X^{(n)}_t \in A_i\}, \qquad i =1,2,3.
\end{align*}
Let $Z$ be the continuous time Markov chain defined on $\calV$ by \eqref{ma21.1}, 
relative to the environment $\mu^n$. Note that the transition probabilities from $x$ 
to one of its neighbors are the same for $Z$ and $X^{(n)}$ if $x $ is in the interior 
of $\calV$, i.e., $x\notin \prt_i \calV \cup(\Z^2 \setminus \calV)$. 
Note also that $Z$ and  $X^{(n-1)}$ have the same transition probabilities 
in the region between $A_1$ and $A_3$.
The expectations and  probabilities in this proof will refer to $Z$.
By Lemma \ref{L:com}, there exists a probability measure $\nu_1$ on $A_1$ such that
$E^{\nu_1} T_2 \leq \res |\calV|$.
We have $|\calV| \leq c_2 \beta_n^2$.

To estimate $\res$ note that by the choice of the constants $\eta_{n-1}$ and $K_{n-1}$
in Theorem \ref{T:eK}, the resistance (with respect to $\mu^{n-1}_e$) between two opposite sides of any
square in $\sS_{n-1}$ will be 1. It follows that the resistance 
between two opposite sides of any square side $\beta_n$ which is a union
of squares in $\sS_{n-1}$ will also be 1. So, using Thompson's principle as
in \cite{BB3} we deduce that $\res \leq c_3$.

So, by Lemma \ref{L:com} we have
\begin{align}\label{ma18.1}
E^{\nu_1} T_2 \leq c_4 \beta_n^2.
\end{align}

We have for some $c_5$, $p_1 >0$ all $n$ and $x\in \calV \setminus \ball(u_n + \calO_n, (3/2) \beta_n)$,
\begin{align*}
P^x_\omega ( T_1 \land T_2  \leq c_5 \beta_n^2) > p_1,
\end{align*}
because an analogous estimate holds for Brownian motion and \assmp we have  \eqref{e:PdistBM}. This and a standard argument based on the strong Markov property imply that for $x\in A_3$,
\begin{align*}
E^x_\omega ( T_1 \land T_2 ) \leq c_6 \beta_n^2.
\end{align*}

Now for $y \in A_1$ and $x \in \calV$ set 
$$ \nu_3^x (y) = P^x_\omega(X^{(n)}(T_1 \wedge T_2) = y  ).$$
(Note that there exist $x$ with $\sum_{y \in A_1} \nu^x_3(y) < 1$.)
We obtain for  $n\geq n_2$ and $x\in A_3$,
\begin{align}\label{ma21.2}
E^x_\omega ( T_2 ) &=
E^x_\omega ( T_1 \wedge T_2 ) + E^x_\omega ((T_2 -T_1) \bone_{ T_1 < T_2} )  \\
& = E^x_\omega ( T_1 \wedge T_2 )  + E^{\nu_3^x} T_2 
\leq  c_6 \beta_n^2 + E^{\nu_3^x}_\om T_2. \nonumber
\end{align}
For $y \in A_1$ the function $x \to \nu^x_3(y)$ is harmonic in $\calV \setminus A_1$.
So we can apply the Harnack inequality Lemma \ref{L:harn} to deduce that there exists
$c_7$ such that
\be
 \nu^x_3(y) \le c_7 \nu^{x'}_3(y) \hbox{ for all } x,x' \in A_3, y \in A_1.
\ee

The measure $\nu_1$ is  the hitting distribution on $A_1$ 
for the process $Z$ starting with $\nu_2$  (see \cite[Chap.~3, p.~45]{AF}). So for
any $x' \in A_3$,
\begin{align*}
\nu_1(y) &= P^{\nu_2}_0  ( Z_{T_1} =y) 
= \sum_{x \in A_3} P^{\nu_2}_0  ( Z_{T_1} =x ) P^x_\om( Z_{T_1} =y)   \\
&\ge \sum_{x \in A_3} P^{\nu_2}_0  ( Z_{T_1} =x ) P^x_\om( Z_{T_1 \wedge T_2} =y)  
\ge  \min_{x \in A_3}   \nu^x_3(y) \ge c_7^{-1}  \nu^{x'}_3(y).
\end{align*}
Hence for any $x \in A_3$,
$$  E^{\nu_3^x}_\om T_2 \le c_7 E^{\nu_1}_\om T_2 \le  c_8 \beta_n^2,$$
and combining this with \eqref{ma21.2} completes the proof. 
\end{proof}

Let
\begin{align*}
R_n^y =\inf\left\{t\geq 0 :  X^{(n)}_t \in (y + a_{n-1}\Z^2) \cup \Gamma^1_n\right\}.
\end{align*}

\begin{lemma}
There exist $c_1>0$ and $p_1<1$ such that for all $x,y \in \bZ^2$,
\begin{align}\label{ma23.1}
&P^x_\om\left(R_n^y \geq c_1 b_n^2 \right) \le p_1,\\
&P^x_\om\left(\sup_{0\leq t \leq R^y_n}|x- X^{(n)}_t| \geq c_1 b_n \right) \le p_1.
\label{ma23.2}
\end{align}
\end{lemma}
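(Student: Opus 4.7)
Plan: The two inequalities should follow from a single stronger claim --- namely that with probability bounded away from zero, the walk $X^{(n)}$ started at $x$ hits $(y+a_{n-1}\bZ^2)\cup \Gamma^1_n$ before time $c_1 b_n^2$ and without leaving the ball $\ball(x,c_1 b_n)$.

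First, if $x\in \Gamma^1_n$ then $R_n^y=0$ by definition and both probabilities are zero; so assume $x\notin \Gamma^1_n$. The key geometric observation is that $D_n\subset \sK(\beta_n + 10b_n+1)$ while $x\notin \sK(2\beta_n)$, so by condition (viii) $\dist(x,D_n)\ge \beta_n - 10 b_n - 1 \gg b_n$. Hence, for $c_1$ chosen not too large, the ball $B:=\ball(x,c_1 b_n)$ is disjoint from $D_n$, and the environments $\mu^n$ and $\mu^{n-1}$ coincide on $B$. Consequently, under $P^x_\omega$, the process $X^{(n)}$ stopped on exiting $B$ has the same law as $X^{(n-1)}$ stopped on exiting $B$, and it suffices to analyze $X^{(n-1)}$ on $B$.

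I would then combine two ingredients, following the template of Lemma \ref{12a29lem}. First, take $\calV=B$, $A_1:=(y+a_{n-1}\bZ^2)\cap B$, $A_2:=\partial B$, and apply Lemma \ref{L:com}. The effective resistance $\res$ between $A_1$ and $A_2$ in $\mu^{n-1}$ is $O(1)$ by Thompson's principle: the choice of $\eta_{n-1},K_{n-1}$ normalises the resistance across any $\beta_n$-cell to $1$, and the density of $A_1$ (one point per $a_{n-1}\times a_{n-1}$ square, with $a_{n-1}\ll b_n$) effectively short-circuits the interior of $B$. Since $|\calV|\le c b_n^2$, Lemma \ref{L:com} yields $E^{\nu_1}_\omega T_2 + E^{\nu_2}_\omega T_1 \le c b_n^2$, and using the Brownian/heat-kernel comparison \eqref{e:PdistBM} for $X^{(n-1)}$ to transfer the bound from the capacitary measures to the point $x$, Markov's inequality gives $P^x_\omega(\tau_{A_1}\wedge \tau_B > c_1 b_n^2)\le 1/4$ for $c_1$ sufficiently large. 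Second, I would show $P^x_\omega(\tau_{A_1}<\tau_B)\ge c>0$ by a Harnack-chain argument: $h(z):=P^z_\omega(\tau_{A_1}<\tau_B)$ equals $1$ on $A_1$ and is harmonic elsewhere in $B$; since $X^{(n-1)}$ has Gaussian heat-kernel bounds at scale $\ge a_{n-1}$ by Lemma \ref{L:hkXn} applied at level $n-1$ (with $a_{n-1}\ge M_{n-1}$ arranged via the \assmp conditions), it satisfies the elliptic Harnack inequality at that scale, and since $A_1$ is dense in $B$ at scale $a_{n-1}$, the identity $h\equiv 1$ on $A_1$ propagates through overlapping Harnack balls of radius $\sim a_{n-1}$ to yield $h\ge c$ throughout $\ball(x,b_n/2)$, in particular at $z=x$. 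Combining the two ingredients, with probability at least $c - 1/4$ the walk hits $A_1$ before exiting $B$ and before time $c_1 b_n^2$, which simultaneously yields \eqref{ma23.1} and \eqref{ma23.2} with $p_1 = 1 - c + 1/4 < 1$.

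The main obstacle is the uniform Harnack lower bound on the hitting probability: single-point hitting in two dimensions is only of logarithmic order, so the argument must exploit that $A_1$ has density $a_{n-1}^{-2}$ and that this scale is much finer than the scale $b_n$ of the ball, and it must propagate $h\equiv 1$ through lattice points using the parabolic Harnack inequality for $X^{(n-1)}$ at the lattice scale --- an estimate that is only available once $a_{n-1}$ is above the (ineffective) threshold $M_{n-1}$ in Lemma \ref{L:hkXn}.
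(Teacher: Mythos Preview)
Your approach is substantially more elaborate than the paper's, and it contains a genuine circularity in the use of the \assmp\ mechanism.

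The paper's proof is a three-line argument: it observes that the family of translates of $\mu^{n-1}$ is finite, that each square in $\sS_{n-1}$ contains exactly one point of $y+a_{n-1}\bZ^2$, and then invokes the Gaussian heat-kernel bounds \eqref{e:GB1} for $X^{(n-1)}$ together with the Brownian approximation \eqref{e:PdistBM}, both at scale $b_n$. The point is that the \assmp\ conditions let one take $b_n$ large enough that $b_n\ge M_{n-1}$, so the Gaussian bounds of Lemma~\ref{L:hkXn} apply at scale $b_n$; the density of the target lattice then makes the hitting event cheap at that scale. No commute-time lemma, no resistance estimate, and no Harnack chaining are needed.

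Your route via Lemma~\ref{L:com} and a Harnack chain would in principle also reach the goal, but the key step is problematic. You propose to apply the elliptic Harnack inequality for $X^{(n-1)}$ at scale $a_{n-1}$, justifying this by ``$a_{n-1}\ge M_{n-1}$ arranged via the \assmp\ conditions''. This cannot be arranged: in the inductive construction, $a_{n-1}$, $\eta_{n-1}$ and $K_{n-1}$ are fixed first, which determines the environment $\mu^{n-1}$ and hence the (ineffective) constant $M_{n-1}=M_{n-1}(\delta,T)$ of Lemma~\ref{L:hkXn}. The \assmp\ freedom operates only \emph{after} this, letting one choose $b_n$, $\beta_n$, $a_n$ large relative to level-$(n{-}1)$ quantities; it gives no handle on $a_{n-1}$ versus $M_{n-1}$. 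Consequently you have no uniform Harnack inequality at scale $a_{n-1}$, and your propagation of $h\equiv 1$ from lattice points to $x$ through balls of radius $\sim a_{n-1}$ is unjustified with constants independent of $n$. (There is a secondary issue even if Harnack were available at that scale: any ball of radius $\sim a_{n-1}$ meets $A_1$, so $h$ is not harmonic on such balls and the chain must be threaded more carefully.)

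The fix is exactly what the paper does: work at scale $b_n$ throughout, where the heat-kernel bounds are available with absolute constants, and let the density of $y+a_{n-1}\bZ^2$ (spacing $a_{n-1}\ll b_n$) do the work.
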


\begin{proof}
Recall that the family $\{\mu^{n-1}_{x+ \cdot}\}_{x\in \Z^2}$ of translates of the 
environment  $\mu^{n-1}_\cdot$ contains only a finite number of distinct elements.
Since each square in $\sS_{n-1}$ contains one point in $(y + a_{n-1}\Z^2)$, 
if $b_n/a_{n-1}$  is sufficiently large \assmp then using the transition density estimates \eqref{e:GB1}
as well as \eqref{e:PdistBM}, we obtain \eqref{ma23.1} and \eqref{ma23.2}. 
\end{proof}

\begin{lemma}
For some $n_1$ and $c_1$, for all $n\geq n_1$, $k\geq 1$, and $\omega$ such that $0 \notin \Gamma^1_n \setminus \prt_i \Gamma^1_n$, 
\begin{align}\label{ma21.5}
E^0_\omega(V^n_k - T^n_{k-1} \mid \calF_{T^n_{k-1}}) \leq c_1 b_n^2 n^{1/2}.
\end{align}
\end{lemma}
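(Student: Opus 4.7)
The plan is to reduce to the $k=1$ case via the strong Markov property at $T^n_{k-1}$ and then run a cycle argument that succeeds with uniformly positive conditional probability in each cycle. For $k\ge 2$, at $T^n_{k-1}$ the walk has just entered $\Gamma^1_n$ from outside, so $X^{(n)}(T^n_{k-1})\in \pd_i\Gamma^1_n$, and strong Markov reduces the claim to showing $g(y,\om):=E^y_\om V^n_1 \le c_1 b_n^2 n^{1/2}$ for every $y\notin \Gamma^1_n\setminus \pd_i\Gamma^1_n$, where now $V^n_1$ is interpreted for the walk started at $y$ and targeting $y+a_{n-1}\bZ^2$. For such $y$, fix the walk $X^{(n)}$ under $P^y_\om$ and split cycle $k$ into the obstacle time $\sigma_k := U^n_k-S^n_{k-1}$ (controlled by Lemma \ref{12a29lem}: $E[\sigma_k\mid\calF_{S^n_{k-1}}]\le c\beta_n^2$) and a \emph{resolution time}
\[
\rho_k^* := R^y_n\circ \theta_{U^n_k},
\]
the time from $U^n_k$ until $X^{(n)}$ first enters $(y+a_{n-1}\bZ^2)\cup \Gamma^1_n$. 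Applying \eqref{ma23.1} at $U^n_k$ via strong Markov gives $P(\rho_k^*>c_1 b_n^2\mid\calF_{U^n_k})\le p_1$, and iterating on consecutive intervals of length $c_1 b_n^2$ yields $P(\rho_k^*>jc_1 b_n^2\mid\calF_{U^n_k})\le p_1^j$, so $E[\rho_k^*\mid\calF_{U^n_k}]\le c b_n^2$.

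The central success step is to upgrade the ``hit-the-union'' estimate to a ``hit $y+a_{n-1}\bZ^2$ first'' estimate using displacement. At $U^n_k$ the walker is at some $z\in \pd_i\Gamma^2_n$; since $\Gamma^1_n=\sK(2\beta_n)$ and $\Gamma^2_n=\bZ^2\setminus \sK(4\beta_n)$, the point $z$ satisfies $\dist(z,\Gamma^1_n)\ge 2\beta_n$. By \eqref{ma23.2} applied at $U^n_k$,
\[
P\Bigl(\sup_{0\le t\le \rho_k^*} |z-X^{(n)}_{U^n_k+t}|<c_1 b_n\,\Big|\,\calF_{U^n_k}\Bigr)\ge 1-p_1,
\]
and \eqref{j2.10} ensures $c_1 b_n<2\beta_n$ for $n$ large. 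On this event the trajectory cannot reach $\Gamma^1_n$ before time $U^n_k+\rho_k^*$, so the first hit of $(y+a_{n-1}\bZ^2)\cup \Gamma^1_n$ lies in $y+a_{n-1}\bZ^2$, which means that cycle $k$ is successful (i.e.\ $V^n_1\in[U^n_k,S^n_k]$). Let $N$ be the first successful cycle. Since $\{N\ge k\}\in \calF_{S^n_{k-1}}$, iterating the conditional bound gives $P(N\ge k)\le p_1^{k-1}$ and $EN\le (1-p_1)^{-1}$.

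Finally, on $\{k<N\}$ cycle $k$ fails, which forces $\rho_k^*=\rho_k$ (the first hit of the union during this cycle must have been in $\Gamma^1_n$), while on $\{k=N\}$ one has $V^n_1\le U^n_N+\rho_N^*$. Therefore $V^n_1\le \sum_{k=1}^N(\sigma_k+\rho_k^*)$, and a Wald-type summation using the $\calF_{S^n_{k-1}}$-measurability of $\{N\ge k\}$ together with the conditional bounds on $\sigma_k$ and $\rho_k^*$ gives
\[
E^y_\om V^n_1 \le EN \cdot (c\beta_n^2+c b_n^2) \le \frac{c\beta_n^2}{1-p_1} \le c_1 b_n^2 n^{1/2}
\]
by $\beta_n\le b_n n^{1/4}$ from \eqref{j2.10}. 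I expect the displacement upgrade to be the main delicate point: replacing the full excursion length $\rho_k$ (which in this 2D setting can be of order $a_n^2\log n \gg b_n^2 n^{1/2}$) by the much shorter $\rho_k^*$ is exactly what allows the per-cycle bound to sum in a useful way.
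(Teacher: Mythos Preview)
Your proof is correct and follows essentially the same route as the paper. The paper defines $\wh R^n_k$ (your $U^n_k+\rho_k^*$), sets $F_k=\{\wh R^n_k<S^n_k\}$ (your ``success''), $G_k=\bigcap_{j\le k}F_j^c$ (your $\{N>k\}$), uses \eqref{ma23.2} together with $b_n n^{1/8}<\beta_n$ exactly as you do to get $P(F_k\mid\calF_{U^n_k})\ge p_2>0$, and then bounds $E^0_\om V^n_1\le \sum_k E[(U^n_k-S^n_{k-1})\bone_{G_{k-1}}]+\sum_k E[(\wh R^n_k-U^n_k)\bone_{G_{k-1}}]$ via \eqref{z1.1}, \eqref{ma23.1} and the geometric tail of $G_{k-1}$; this is precisely your Wald summation in different notation.
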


\begin{proof}
Assume that $\omega$ is such that $0 \notin \Gamma^1_n \setminus \prt_i \Gamma^1_n$.
Let
\begin{align*}
\wh R^n_k =\inf\left\{t\geq U^n_k :  X^{(n)}_t \in (X^{(n)}(T^n_0) + a_{n-1}\Z^2) \cup \Gamma^1_n\right\}.
\end{align*}
Let $F_k = \{ \wh R^n_k < S^n_k\}$ and $G_k = \bigcap _{j=1}^k F_j^c$.
Since $b_n n^{1/8} < \beta_n$ for large $n$, we obtain from \eqref{ma23.2} and definitions of $\Gamma^1_n, \Gamma^2_n, U^n_k$ and $S^n_k$ that there exists $p_2>0$ such that for $x\in \Gamma^2_n$,
\begin{align*}
P^x_\omega(F_k \mid \calF_{U^n_k}) > p_2.
\end{align*}
Hence,
\begin{align}\label{n5.2}
P^x_\omega(G_k ) < (1-p_2)^k.
\end{align}
Note that if $F_k$ occurs then $V^n_1 \leq \wh R^n_k$.
We have, using \eqref{z1.1}, \eqref{ma23.1} and \eqref{n5.2},
\begin{align*}
E^0_\omega(V^n_1 - T^n_{0} ) 
&\leq
\sum_{k=1}^\infty
E^0_\omega((U^n_k - S^n_{k-1}) \bone_{G_{k-1}})
+ 
\sum_{k=1}^\infty
E^0_\omega((\wh R^n_k - U^n_{k}) \bone_{G_{k-1}})\\
&\leq \sum_{k=1}^\infty
c_2 \beta_n^2 (1-p_2)^{k-1}
+ \sum_{k=1}^\infty c_3 b_n^2 (1-p_2)^{k-1}  \\
&\leq c_4 \beta_n^2 \leq c_5 b_n^2 n^{1/2}.
\end{align*}
This proves the lemma for $k=1$.
The general case is obtained by applying this estimate to the
process shifted by $T^n_{k-1}$; in other words, by using the strong Markov property.
\end{proof}

\begin{lemma} \label{L:sigma_1}
For every $\delta>0$ there exists $n_1$ such that for all $n\geq n_1$,  $u\geq a_n^2$, and $\omega$ such that $0 \notin \Gamma^1_n \setminus \prt_i \Gamma^1_n$,
\begin{align}\label{n8.5}
P^0_\omega
\left(  \sigma^{n,2}_u / u \le \delta \right) \geq 1-\delta/2.
\end{align}
\end{lemma}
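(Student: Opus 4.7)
The plan is to use Markov's inequality: since $P^0_\omega(\sigma^{n,2}_u > \delta u) \le \bE^0_\omega \sigma^{n,2}_u/(\delta u)$, it will suffice to show that $\bE^0_\omega \sigma^{n,2}_u \le C u \log n/\sqrt n$, uniformly in $u \ge a_n^2$ and in the admissible $\omega$. Since $\log n/\sqrt n \to 0$, for $n$ large enough the resulting probability will be at most $\delta/2$. My starting point is the cycle decomposition $\sigma^{n,2}_u \le \sum_{k \ge 1}\tau_k \bone_{\{T^n_{k-1}<u\}}$ with $\tau_k := V^n_k - T^n_{k-1}$. Since $\{T^n_{k-1}<u\}\in\calF_{T^n_{k-1}}$, the tower property together with the preceding estimate \eqref{ma21.5} will give
\[
\bE^0_\omega \sigma^{n,2}_u \le c_1 b_n^2\, n^{1/2}\, \bE^0_\omega N_u,\qquad N_u := \#\{k\ge 1: T^n_{k-1}<u\}.
\]
Using $b_n \le a_n/\sqrt n$ (so $b_n^2 n^{1/2} \le a_n^2/\sqrt n$), it remains to prove $\bE^0_\omega N_u \le Cu\log n/a_n^2$ for $u \ge a_n^2$.

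To bound $\bE N_u$ I plan to organise the cycles $[T^n_{k-1},T^n_k]$ into macroscopic blocks, where each block corresponds to one escape of $X^{(n)}$ from an $O(\beta_n)$-neighbourhood of a single centre $z\in\sC_n$, ending at a first hit of $\Gamma^1_n$ at a different centre. By the Brownian approximation \eqref{e:PdistBM} applied at scale $a_n$, combined with standard 2D estimates on the torus of side $a_n$: (i) such a block contains $O(\log(a_n/\beta_n)) = O(\log n)$ micro-cycles around its current obstacle before escaping, because from a $\beta_n$-scale ball the harmonic measure that the process reaches distance $a_n$ before returning to that ball is of order $1/\log n$; and (ii) each complete block has expected duration of order $a_n^2 \log n$, the 2D hitting time of a $\beta_n$-scale ball on an $a_n$-scale torus. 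Counting then gives $\bE N_u \le C u \log n/a_n^2$ for $u\ge a_n^2$ (both the case when only a partial block fits and the case of many blocks yield the same bound), so that combining with the earlier inequality produces $\bE^0_\omega \sigma^{n,2}_u \le C u \log n/\sqrt n$, as required.

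The main obstacle will be turning the macroscopic block picture into rigorous estimates, uniformly in the environment $\om$ satisfying $0 \notin \Gamma^1_n \setminus \prt_i \Gamma^1_n$ and in the starting point $X^{(n)}(T^n_{k-1}) \in \prt_i \Gamma^1_n$. The key Brownian inputs (the exit probability from $\ball(z, a_n/4) \setminus \ball(z,\beta_n)$ and the hitting time of a $\beta_n$-scale ball on an $a_n$-scale torus) are classical, but to apply them to $X^{(n)}$ one must invoke \eqref{e:PdistBM} with $a$ of order $a_n$, which is legitimate under the scale-separation assumption \assmp between $a_n$ and $b_{n-1}$. Once the uniform $O(\log n)$ bound on the number of returns to $\Gamma^1_n$ before an escape of scale $a_n$ is in place, the rest of the argument is a routine Markov inequality combined with the strong Markov property applied at the successive macroscopic escapes.
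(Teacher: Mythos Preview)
Your route via Markov's inequality on $E^0_\omega \sigma^{n,2}_u$ is genuinely different from the paper's. The paper never bounds $E N_u$; it fixes a deterministic $j_* = \lceil u/(b_n^2 n^{5/8}) \rceil$, uses \eqref{ma21.5} and Markov's inequality to get $\sum_{j\le j_*}(V^n_j - T^n_{j-1}) \le \delta u$ with high probability, and then proves a \emph{mesoscopic} lower bound $P\big(T^n_k - V^n_k \ge c\, b_n^2 n^{3/4}\,\big|\,\calF_{V^n_k}\big) \ge c/\log n$ by letting the process escape only from $\sK(2\beta_n)$ to $\sK(b_n n^{3/8})$ (where it coincides with $X^{(n-1)}$). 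A binomial minorisation then gives $\sum_{j\le j_*}(T^n_j - V^n_j) > u$ with high probability, and on the intersection $\sigma^{n,2}_u \le \delta u$.

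Your macroscopic-block argument has a real gap at step~(ii). The ``expected block duration $\asymp a_n^2 \log n$'' is a statement about the hitting time of $\bigcup_{z'\neq z}\ball(z',2\beta_n)$, a target of relative size $\beta_n/a_n \sim n^{-1/4}$ at scale $a_n$. Prokhorov closeness in \eqref{e:PdistBM} does not control hitting times of shrinking targets (these are not continuous functionals), and in any case \eqref{e:PdistBM} gives a Brownian approximation for $X^{(n)}$ only at scales $a\ge a_{n+1}$, not at scale $a_n$; during a block the process does enter the obstacle region around $z$, so you cannot simply replace it by $X^{(n-1)}$ either. The renewal passage from ``expected block duration'' to $E N_u$ also has an overshoot issue that needs a second-moment or truncation argument.

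Your Markov-inequality strategy can be rescued by abandoning the macroscopic scale: use the paper's mesoscopic escape (to radius $b_n n^{3/8}$) to get $E[T^n_k - T^n_{k-1}\mid\calF_{T^n_{k-1}}] \ge c\, b_n^2 n^{3/4}/\log n$, and a Wald-type inequality then yields $E N_u \le C u \log n/(b_n^2 n^{3/4})$. Combined with \eqref{ma21.5} this gives $E^0_\omega\sigma^{n,2}_u \le C u\, n^{-1/4}\log n$, which suffices. The paper's direct on/off comparison is cleaner in that it bypasses even this renewal bookkeeping.
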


\begin{proof}
Assume that $\omega$ is such that $0 \notin \Gamma^1_n \setminus \prt_i \Gamma^1_n$.
Fix an arbitrarily small $\delta >0$, consider $u \geq a_n^2$ and let $j_* = \lceil u/(b_n^2 n^{5/8} )\rceil$. Then
\eqref{ma21.5} implies that for some $c_1$ and $n_2$, all $n\geq n_2$, $u \geq a_n^2$, 
\begin{align*}
E^0_\om \left( \frac 1{j_*} \sum_{j=1}^{j_*} V^n_j - T^n_{j-1}\right) 
\leq c_1 b_n^2 n^{1/2}.
\end{align*}
Hence, for some $n_3$, all $n\geq n_3$, $u \geq a_n^2$,
\begin{align*}
P^0_\om \left( \frac 1{j_*} \sum_{j=1}^{j_*} V^n_j - T^n_{j-1} \ge \delta b_n^2 n^{9/16}  \right) \le \delta/8 ,
\end{align*}
and, since $j_* \delta b_n^2 n^{9/16}  \leq \delta u$, 
\begin{align}\label{d27.1}
P^0_\om \left(  \sum_{j=1}^{j_*} V^n_j - T^n_{j-1} \ge \delta u \right) \le \delta/8 . 
\end{align}

Recall $\calK (r)$ from \eqref{e:Hdef}.
Let
\begin{align*}
\wh V^n_k & = \inf\{t\geq V^n_{k}: X^{(n)}_t \in 
\Z^2 \setminus \calK(b_n n ^{3/8})\} \land T^n_k, \qquad k \geq 1,\\
\wt V^n_k & = \inf\{t\geq \wh V^n_{k}: |X^{(n)}_t - X^{(n)}(\wh V^n_k)| \geq (1/2)b_n n^{3/8} \} , \qquad k \geq 1. 
\end{align*}
We can use estimates for Brownian hitting 
probabilities \assmp to see that for some $c_2, c_3$ and $n_4$,
all $n\geq n_4$, $k$, 
\begin{align}\label{j2.13}
P^0_\om(\wh V^n_k < T^n_k \mid \calF_{V^n_k}) \geq c_2
\frac{\log (4 \beta_n) - \log (2 \beta_n)}
{\log (2 b_n n^{3/8})- \log (2 \beta_n)} 
\geq  c_3 /\log n . 
\end{align}
There exist \assmp $c_4$ and $n_5$, such that for
all $n\geq n_5$, $k\geq 2$, 
\begin{align*}
P^0_\om &(T^n_k - V^n_k \geq c_4 b_n^2 n^{3/4}  \mid \wh V^n_k < T^n_k, \calF_{\wh V^n_k}) \\
&\ge
P^0_\om(\wt V^n_k - \wh V^n_k \geq c_4 b_n^2 n^{3/4}  \mid \wh V^n_k < T^n_k, \calF_{\wh V^n_k}) \ge 3/4. 
\end{align*}
This and \eqref{j2.13} imply that the sequence $\{T^n_k - V^n_k\}_{k\geq 2}$ is stochastically  minorized by a sequence of i.i.d.~random variables which take value $c_4 b_n^2 n^{3/4} $ with probability $c_3 /\log n$ and they take value 0 otherwise.
This implies that for some $n_6$,
all $n\geq n_6$, $u \geq a_n^2$, 
\begin{align*}
P^0_\om\left( \frac 1{j_*} \sum_{j=2}^{j_*} T^n_j - V^n_j  \le  b_n^2 n^{3/4}/ \log^2 n \right) \le \delta/4 
\end{align*}
and, because  $j_* b_n^2 n^{3/4}/\log^2 n \geq u$ assuming $n_6$ is large enough,
\begin{align*}
P^0_\om\left(  \sum_{j=2}^{j_*} T^n_j - V^n_j  \le u \right) \le \delta/4 . 
\end{align*}
We combine this with \eqref{d27.1} and the definition of $\sigma^{n,2}_u$ to obtain
for some $n_7$,
all $n\geq n_7$, $u \geq a_n^2$, 
\begin{align}\label{d27.4}
P^0_\om(\sigma^{n,2}_u/u \le \delta) \geq 1-3\delta/8.
\end{align}
This completes the proof of the lemma.
\end{proof}

\bigskip

Let $Y^n_k = (Y^n_{k,1}, Y^n_{k,2}) = X^{(n)}(V^n_{k+1} ) - X^{(n)}(T^n_{k} )$.
Set 
$\bar Y^n_k = \sup_{T^n_k \leq t \leq V^n_{k+1}} |X^{(n)}(t)- X^{(n)}(T^n_{k} )|$.
For $x\in \Z^2$, let
 $\Pi_n(x) \in B'_n -u_n + \calO_n $ be the unique point with the 
property that $x-\Pi_n(x) = a_n y$ for some $y\in \Z^2$.

We next estimate the variance of  $X^{n,2}(V^n_{m+1}) = \sum_{k=0}^ m Y^n_k$.

\begin{lemma} \label{L:Ynest}
There exist $c_1, c_2$ and $n_1$ such that for all $n\geq n_1$, $k\geq 0$, $j=1,2$, and $\omega$,
\begin{align}\label{ma24.6}
E^0_\omega |Y^n_{k,j}| &\leq E^0_\omega |Y^n_k| \leq E^0_\omega |\bar Y^n_k| \leq c_1 \beta_n, \\
\label{ma24.7}
\Var Y^n_{k,j} &\le \Var \bar Y^n_{k} \le c_2 \beta_n^2 , \qquad \text { under } P^x_\omega.
\end{align} 
\end{lemma}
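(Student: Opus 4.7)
The plan is to bound the spatial extent of the process during the interval $[T^n_k, V^n_{k+1}]$ by decomposing it into excursions from the ``ball region'' $\Gamma^1_n$ and controlling each excursion via \eqref{ma23.2}. First I would reduce to $k=0$ via the strong Markov property at $T^n_k$, noting that the estimate needs only to hold uniformly in the (random) starting point $X^{(n)}(T^n_k)\in\Gamma^1_n$ and that after translation the problem is of the same form. The interval $[0,V^n_1]$ then splits into an initial segment $[0,U^n_1]$, alternating ``excursion'' pieces $[U^n_j,S^n_j]$ and ``bounce'' pieces $[S^n_j,U^n_{j+1}]$ for $1\le j<j_*$, and a final piece $[U^n_{j_*},V^n_1]$, where $j_*$ is the index of the first excursion in which the lattice $X^{(n)}(T^n_0)+a_{n-1}\bZ^2$ is hit. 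On each bounce piece (and on the initial segment) the process stays inside one connected component of $\sK(4\beta_n)$, a ball of radius $4\beta_n$, so its displacement there is $\le 8\beta_n$. Writing $\bar\Delta_j$ for the sup-displacement during the $j$-th excursion (truncated at $V^n_1$ when $j=j_*$), this yields the pointwise bound
\[
\bar Y^n_0 \;\le\; 8\beta_n(j_*+1) + \sum_{j=1}^{j_*}\bar\Delta_j.
\]

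The two core estimates are geometric tail bounds for $\bar\Delta_j$ and for $j_*$. For $\bar\Delta_j$ I would introduce checkpoints $\tau_0=U^n_j$ and $\tau_{k+1}=\inf\{t\ge\tau_k:|X^{(n)}_t-X^{(n)}_{\tau_k}|\ge c_1 b_n\}$ and apply \eqref{ma23.2} at each checkpoint with $x=X^{(n)}_{\tau_k}$ and $y=X^{(n)}(T^n_0)$; by the strong Markov property the conditional probability of reaching $\tau_{k+1}$ without first hitting the lattice or $\Gamma^1_n$ is at most $p_1$. Iterating yields $\bP(\bar\Delta_j\ge Kc_1 b_n\mid\calF_{U^n_j})\le p_1^{K-1}$, and hence $E[\bar\Delta_j^m\mid\calF_{U^n_j}]\le C_m b_n^m$ for every $m\ge 1$. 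The bound on $j_*$ rests on the inequality $c_1 b_n<2\beta_n$ (valid since $\beta_n>b_n n^{1/8}$): a path that stays within $c_1 b_n$ of $X^{(n)}(U^n_j)\in\Gamma^2_n$ cannot reach $\Gamma^1_n$ and so must hit the lattice, meaning the first iteration of \eqref{ma23.2} already produces a successful excursion with probability at least $1-p_1$. Hence $\bP(j_*>j\mid\calF_{U^n_j})\le p_1$ and all moments of $j_*$ are bounded.

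To finish, set $\tilde\Delta_j=\bar\Delta_j\bone_{\{j\le j_*\}}$. Conditioning on $\calF_{U^n_j}$ via the strong Markov property gives $E\tilde\Delta_j\le Cb_n p_1^{j-1}$ and $\|\tilde\Delta_j\|_2\le Cb_n p_1^{(j-1)/2}$; summing in $j$ and applying Minkowski's inequality yields $E\sum_{j=1}^{j_*}\bar\Delta_j\le Cb_n$ and $\|\sum_{j=1}^{j_*}\bar\Delta_j\|_2\le Cb_n$. Combining with the moment bounds on $j_*$ and using $b_n\le\beta_n$ then delivers $E\bar Y^n_0\le c_1\beta_n$ and $E(\bar Y^n_0)^2\le c_2\beta_n^2$. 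The inequalities for $|Y^n_{k,j}|$ and $\Var Y^n_{k,j}$ follow at once from the pointwise bounds $|Y^n_{k,j}|\le|Y^n_k|\le\bar Y^n_k$. The main obstacle I anticipate is making the iterated application of \eqref{ma23.2} completely rigorous: at each checkpoint one must apply the bound at the process's current random position and verify that the iteration does not break down if $X^{(n)}_{\tau_k}$ happens to sit on the lattice $a_{n-1}\bZ^2$; however, since \eqref{ma23.2} holds uniformly in $x,y\in\bZ^2$ and the degenerate case only terminates the iteration early (which is harmless for our bound), the argument goes through cleanly.
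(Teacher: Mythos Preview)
Your proposal is correct and follows essentially the same route as the paper: bound the displacement on each ``bounce'' piece $[S^n_{j-1},U^n_j]$ deterministically by a multiple of $\beta_n$, then use \eqref{ma23.2} together with the geometric tail of the number of excursions before $V^n_1$ to deduce that $\bar Y^n_k$ has exponential tails at scale $\beta_n$. The paper compresses this into a single sentence (``$|\bar Y^n_k|$ is stochastically majorized by an exponential random variable with mean $c_3\beta_n$''), whereas you have written out the iteration explicitly; your checkpoint argument for the tail of $\bar\Delta_j$ and your observation that $c_1 b_n < 2\beta_n$ forces a successful lattice hit with probability $\ge 1-p_1$ are exactly what is hiding behind the paper's sketch.
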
 

\begin{proof} Let 
\begin{align}\label{n8.1}
\sX^{(n)}_k(t) &= X^{(n)}_t + \Pi_n(X^{(n)}(T^n_k)) - X^{(n)}(T^n_k),
\qquad t\in [T^n_k, V^n_{k+1}],
\end{align}
and note that 
\begin{align*}
Y^n_k = (Y^n_{k,1}, Y^n_{k,2}) = \sX^{(n)}_k(V^n_{k+1} ) - \sX^{(n)}_k(T^n_{k} ).
\end{align*}

It follows from the definition that we have $\sup_{S^n_{k-1} \leq t \leq U^n_k} |X^{(n)}(t ) - X^{(n)}(S^n_{k-1} )| \le 16\beta_n$, a.s.
This, \eqref{ma23.2} and the definition of $V^n_{k+1}$ imply that $|\bar Y^n_k|$ is stochastically majorized by an exponential random variable with mean $c_3 \beta_n$. This easily implies the lemma.
\end{proof}

Next we will estimate the covariance of $Y^n_{k,1}$ and $Y^n_{j,1}$ for $j\ne k$.

\begin{lemma} \label{L:covY}
There exist $c_1, c_2$ and $n_1$ such that for all $n\geq n_1$, $j < k-1$ and $\omega$ such that $0 \notin \Gamma^1_n \setminus \prt_i \Gamma^1_n$, under $P^0_\omega$,
\begin{align}\label{n8.3} 
\Cov(Y^n_{j,1},Y^n_{k,1}) & \le c_1 e^{-c_2 (k-j)} \beta_n^2.
\end{align}
\end{lemma}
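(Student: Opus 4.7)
The plan is to extract a finite-state Markov chain underlying the sequence $(Y^n_k)$, establish a uniform Doeblin minorization for it using the Harnack inequality of Lemma~\ref{L:harn}, and then read off the covariance decay via the tower property. Throughout we work under $P^0_\omega$ with $\omega$ satisfying $0\notin\Gamma^1_n\setminus\partial_i\Gamma^1_n$.

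First I would define, for each $k\ge 1$, $R_k := X^{(n)}(T^n_k) - z_k \in S := \partial_i\ball(0,2\beta_n)$, where $z_k\in\sC_n$ is the center of the $\Gamma^1_n$-ball entered at time $T^n_k$. By the $a_n$-periodicity of $\mu^n$ and the strong Markov property at $T^n_k$, the law of $(X^{(n)}_{T^n_k+s}-X^{(n)}_{T^n_k})_{s\ge 0}$ depends on $\calF_{T^n_k}$ only through $R_k$. Hence $(R_k)_{k\ge 1}$ is a Markov chain on the finite set $S$, and
\begin{align*}
g(r) := E^0_\omega[Y^n_{k,1}\mid R_k = r]
\end{align*}
is a deterministic function on $S$ with $\|g\|_\infty\le c\beta_n$ by Lemma~\ref{L:Ynest}.

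The main step, and the principal obstacle, is a uniform Doeblin minorization: I need $\alpha\in(0,1)$ independent of $n$ and a probability measure $\nu$ on $S$ such that $P^0_\omega(R_k\in A\mid R_{k-1}=r)\ge\alpha\,\nu(A)$ for every $r\in S$ and $A\subset S$. After exiting the starting ball, the process reaches some lattice point $X^{(n)}(V^n_k)\in\Gamma^2_n$ and thereafter runs until it first hits $\partial_i\Gamma^1_n$. For a fixed ball center $z\in\sC_n$, the function $x\mapsto P^x_\omega(X^{(n)}\text{ enters }\Gamma^1_n\text{ at a point of }z+A)$ is harmonic on $\Z^2\setminus\Gamma^1_n$, so Lemma~\ref{L:harn} applied on a shell of radii $\sim\beta_n$ around $z$ bounds the density ratio of such entry distributions between any two distant starting points by a universal constant. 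Taking $\nu$ to be the entry distribution from a canonical reference point (pushed onto $S$ by the projection $x\mapsto x-z_k(x)$) and summing over the possible centers yields the minorization, with $\alpha$ depending only on $p_1$ from Lemma~\ref{L:harn}. The delicate point is the $n$-uniformity: since $|S|\sim\beta_n\to\infty$ the bound must be stated relative to $\nu$ rather than pointwise, and one must check that the Harnack shells fit inside $\Gamma^2_n$ and that the Brownian approximation \eqref{e:PdistBM} makes the Harnack constants $n$-independent, using $\beta_n\ll a_n$ and $b_n n^{1/8}<\beta_n$.

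Once the minorization is in place, Doeblin's theorem yields a unique stationary distribution $\pi$ for $(R_k)$ and
\begin{align*}
\sup_r\big|(P^m g)(r)-\textstyle\int g\,d\pi\big|\le 2\|g\|_\infty(1-\alpha)^m, \quad m\ge 0.
\end{align*}
Since $Y^n_{j,1}$ and $R_{j+1}$ are $\calF_{T^n_{j+1}}$-measurable and $E^0_\omega[Y^n_{k,1}\mid\calF_{T^n_k}]=g(R_k)$, the tower property gives
\begin{align*}
|\Cov(Y^n_{j,1},Y^n_{k,1})|
&=\big|E^0_\omega\big[Y^n_{j,1}\big(E^0_\omega[g(R_k)\mid\calF_{T^n_{j+1}}]-E^0_\omega[g(R_k)]\big)\big]\big|\\
&\le E^0_\omega|Y^n_{j,1}|\cdot 2\|g\|_\infty(1-\alpha)^{k-j-1}\le c_1\beta_n^2 e^{-c_2(k-j)},
\end{align*}
with $c_2=-\log(1-\alpha)$ and both factors controlled by Lemma~\ref{L:Ynest}. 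This is the desired bound.
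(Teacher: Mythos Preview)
Your approach is essentially the same as the paper's: both extract a finite-state Markov chain from the positions $X^{(n)}(T^n_k)$ reduced modulo the period, use Lemma~\ref{L:harn} to obtain a uniform one-step coupling constant independent of $n$, deduce geometric loss of memory, and finish with the tower property together with the moment bounds of Lemma~\ref{L:Ynest}. The paper records the chain as $\sX^{(n)}_k(T^n_k)=\Pi_n(X^{(n)}(T^n_k))\in\Gamma^3_n$, proves a ratio bound on the one-step kernel, and then cites an iteration lemma from \cite{BTW,BK}; your Doeblin minorization is the standard equivalent formulation, so this is only a cosmetic difference.

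The one place where the arguments diverge slightly is the Harnack step. The paper compares the {\em exit} distributions onto $\Gamma^4_n=\partial_i\ball(u_n+\calO_n,3\beta_n)$ from two starting points in $\Gamma^3_n$, and then invokes the strong Markov property at $\tau(\Gamma^4_n)$ to carry the comparison forward to $T^n_1$; this handles in one stroke the possibility of entering any ball of $\Gamma^1_n$. Your version compares {\em entry} distributions into $\Gamma^1_n$ via Harnack on an obstacle-free shell around each center $z$, then projects and sums over $z$. Both routes work, but note that Lemma~\ref{L:harn} is stated for balls $B_1$ with $D_n\cap B_1=\emptyset$; neither the paper's choice $B_1=\ball(u_n+\calO_n,3\beta_n)$ nor a literal ``shell'' fits the hypothesis directly. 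In either case one really chains $O(1)$ Harnack balls of radius $\sim\beta_n$ along the annulus between radii $2\beta_n$ and $3\beta_n$, which is obstacle-free since the $n$th-level obstacles sit at distance $\sim\beta_n$ from the center and $100b_n<\beta_n$. Your sketch acknowledges this issue (``the Harnack shells fit inside $\Gamma^2_n$''), so the gap is only in the write-up, not the idea.
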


\begin{proof}
Assume that $\omega$ is such that $0 \notin \Gamma^1_n \setminus \prt_i \Gamma^1_n$.
Let
\begin{align*}
\Gamma^3_n &= \Gamma^1_n \cap \ball(u_n + \calO_n, a_n/2)
= \ball(u_n + \calO_n, 2\beta_n),\\
\Gamma^4_n &= \prt_i \ball(u_n + \calO_n, 3\beta_n),\\
\tau(A) &= \inf\{t\geq 0: \sX^{(n)}_0(t) \in A\}.
\end{align*}

Suppose that $x,v\in \Gamma^3_n$ and $y \in \Gamma^4_n$. 
By the Harnack inequality proved in Lemma \ref{L:harn},
\begin{align}\label{ma23.4}
\frac{P_\omega^x (\sX^{(n)}_0(\tau(\Gamma^4_n)) = y)}
{P_\omega^v (\sX^{(n)}_0(\tau(\Gamma^4_n)) = y)}
\geq c_3.
\end{align}

Let $\calT^n_k$ have the same meaning as $T^n_k$ but relative to the process $\sX^{(n)}_k$ rather than $X^{(n)}$.
We obtain from \eqref{ma23.4} and the strong Markov property applied at $\tau(\Gamma^4_n)$ that, 
for any 
$x,v,y \in \Gamma^3_n$ we have
\begin{align*}
\frac{P_\omega^x (\sX^{(n)}_0(\calT^n_1) = y)}
{P_\omega^v (\sX^{(n)}_0(\calT^n_1) = y)}
\geq c_3.
\end{align*}
Recall that $T^n_0 =0$.
The last estimate implies that, for  
$x,v,y \in \Gamma^3_n$,
\begin{align*}
\frac{P_\omega (\sX^{(n)}_1(T^n_{1}) = y \mid \sX^{(n)}_0(T^n_0) = x)}
{P_\omega (\sX^{(n)}_1(T^n_{1}) = y \mid \sX^{(n)}_0(T^n_0) = v)}
\geq c_3.
\end{align*}
Since the process $X^{(n)}$ is time-homogeneous, this shows that for  
$x,v,y \in \Gamma^3_n$ and all $k$,
\begin{align}\label{d29.2}
\frac{P_\omega (\sX^{(n)}_{k+1}(T^n_{k+1}) = y \mid \sX^{(n)}_k(T^n_k) = x)}
{P_\omega (\sX^{(n)}_{k+1}(T^n_{k+1}) = y \mid \sX^{(n)}_k(T^n_k) = v)}
\geq c_3.
\end{align}
We now apply Lemma 6.1 of \cite{BTW} (see Lemma 1 of \cite{BK} for a better presentation of the same estimate) to see that \eqref{d29.2} implies that 
there exist constants $C_k$, $k\geq 1$, such that for every $k$ and all 
$x,v,y \in \Gamma^3_n$,
\begin{align*}
\frac{P_\omega^x (\sX_k^{(n)}(T^n_k) = y)}
{P_\omega^v (\sX_k^{(n)}(T^n_k) = y)}
\geq C_k.
\end{align*}
Moreover, $C_k\in(0,1)$, $C_k$'s depend only on $c_3$, and $1-C_k \le e^{-c_4 k}$ for some $c_4>0$ and all $k$.
By time homogeneity of $X^{(n)}$, for $m\leq j<k$ and all $x,v,y,z \in \Gamma^3_n$,
\begin{align*}
\frac{P^z_\omega (\sX_k^{(n)}(T^n_k) = y \mid \sX_j^{(n)}(T^n_j) = x)}
{P^z_\omega (\sX_k^{(n)}(T^n_k) = y \mid \sX_j^{(n)}(T^n_j) = v)}
\geq C_{k-j},
\end{align*}
and, by the strong Markov property applied at $T^n_j$, 
\begin{align*}
\frac{P^z_\omega (\sX_k^{(n)}(T^n_k) = y \mid \sX_j^{(n)}(T^n_j) = x)}
{P^z_\omega (\sX_k^{(n)}(T^n_k) = y \mid \sX_m^{(n)}(T^n_m) = v)}
\geq C_{k-j}.
\end{align*}
This and \eqref{ma24.6} imply that for $j<k-1$ and $x\in \Z^2$,
\begin{align}\nonumber
|\qE^x_\omega(Y^n_{k,1} - \qE^x_\omega Y^n_{k,1} \mid \calF_{T^n_{j+1}})|
&=
|\qE^x_\omega(Y^n_{k,1}  \mid \calF_{T^n_{j+1}})
- \qE^x_\omega Y^n_{k,1} |   \\
\nonumber
&\le  (1- C_{k-j-1}) \sup_{y\in \Z^2} \qE^y_\omega |Y^n_{k,1}| \\
&\le  e^{-c_4 (k-j-1)} c_5 \beta_n \leq 
 c_6 e^{-c_4 (k-j)}  \beta_n.
\label{ma24.8}
\end{align}
Hence  for $j< k-1$,
\begin{align*}
\Cov(Y^n_{j,1},Y^n_{k,1})
&= \qE^x_\omega ((Y^n_{j,1} - \qE^x_\omega Y^n_{j,1})
(Y^n_{k,1} - \qE^x_\omega Y^n_{k,1}))\\
&= \qE^x_\omega (\qE^x_\omega((Y^n_{j,1} - \qE^x_\omega Y^n_{j,1})
(Y^n_{k,1} - \qE^x_\omega Y^n_{k,1}) \mid \calF_{T^n_{j+1}}))\\
&= \qE^x_\omega ((Y^n_{j,1} - \qE^x_\omega Y^n_{j,1}) \qE^x_\omega(Y^n_{k,1} - \qE^x_\omega Y^n_{k,1} \mid \calF_{T^n_{j+1}}))\\
&\leq \qE^x_\omega (|Y^n_{j,1} - \qE^x_\omega Y^n_{j,1}| \cdot |\qE^x_\omega(Y^n_{k,1} - \qE^x_\omega Y^n_{k,1} \mid \calF_{T^n_{j+1}})|)\\
& \le 2 \qE^x_\omega |Y^n_{j,1}| c_6 e^{-c_4 (k-j)} \beta_n \\
& \le c_7 e^{-c_4 (k-j)} \beta_n^2.
\end{align*}
\end{proof}

\begin{proof}[Proof of Proposition \ref{d22.2}]
Assume that $\omega$ is such that $0 \notin \Gamma^1_n \setminus \prt_i \Gamma^1_n$.
We combine \eqref{n8.3} and \eqref{ma24.7} to see that for some $c_{1}$ and $c_{2}$ and all  $m\geq 1$, we have under $P^0_\om$, 
\begin{align}\label{d29.10}
\Var\left(\sum_{k=0}^m Y^n_{k,1}\right) &= 
\sum_{j=0}^m \sum_{k=0}^m
\Cov(Y^n_{j,1},Y^n_{k,1})\\
& \le \sum_{j=0}^m \sum_{k=0}^m
c_{1} e^{-c_{3} (k-j)} \beta_n^2 \leq c_{2} m \beta_n^2. \nonumber
\end{align}

For fixed $n$ and $\om$, the process 
$\{\sX^{(n)}_k(T^n_k), k\geq 1\}$ 
is Markov with a finite state space 
and one communicating class, so it has a unique stationary distribution. We will call it $\std(n)$.
We will argue 
that $\qE^{\std(n)}_\om Y^n_{k,1} = 0$. Since $X^{(n)}$ and $X^{(n-1)}$ 
satisfy the quenched invariance principle and they are random walks among symmetric (in distribution) conductances, they have zero means. Recall that 
$X^{(n)} = X^{n,1} + X^{n,2}$ and $\wh X^{n,1}$ has the same distribution as $X^{(n-1)}$. 
It follows that for some $c_{4}>0$ and $c_{5}<1/4$ and all large $t$, we have 
\begin{align*}
P^{\std(n)}_\om \left(\sup_{1\le s \le t} |\wh X^{n,1} _s| \geq  c_{4}\sqrt{t}\right) 
=P^{\std(n)}_\om \left(\sup_{1\le s \le t} |X^{(n-1)} _s| \geq  c_{4}\sqrt{t}\right) < c_{5}. 
\end{align*}
Since $\wh X^{n,1}_t =  X^{n,1}(\wh \sigma^{n,1}_t)$ and $\wh \sigma^{n,1}_t \geq t$, the last estimate implies that
\begin{align*}
P^{\std(n)}_\om \left(\sup_{1\le s \le t} |X^{n,1} _s| \geq  c_{4}\sqrt{t}\right)  < c_{5}. 
\end{align*}
We also have 
for some $c_{6}>0$ and $c_{7}<1/4$, and all large $t$, 
\begin{align*}
P^{\std(n)}_\om \left(\sup_{1\le s \le t} |X^{(n)} _s| \geq  c_{6}\sqrt{t}\right)  < c_{7}. 
\end{align*}
Since $X^{n,2} = X^{(n)} - X^{n,1}$, we obtain
for some $c_{8}>0$ and $c_{9}<1/2$ and all large $t$, 
\begin{align*}
P^{\std(n)}_\om \left(\sup_{1\le s \le t} |X^{n,2} _s| \geq  c_{8}\sqrt{t}\right)  < c_{9}. 
\end{align*}
This shows that $X^{n,2}$ does not have a linear drift.
It is clear from the law of large numbers that $\liminf_{t\to\infty} \sigma_t^{n,2}/t >0$, so $\wh X^{n,2}$ does not have a linear drift either.
We conclude that $\qE^{\std(n)}_\omega Y^n_{k,1} = 0$. 

Now suppose that $X^{(n)}_0$ does not necessarily have the distribution $\std(n)$. 
The fact that $\qE^{\std(n)}_\omega Y^n_{k,1} = 0$ and a calculation similar to that in \eqref{ma24.8} imply that,
\begin{align*}
|\qE^0_\omega  Y^n_{k,1}| \le  c_{10} e^{-c_{11} k}  \beta_n .
\end{align*}

Let $c_{12} $ be the constant denoted $c_1$ in \eqref{ma24.6}.
The last estimate and \eqref{ma24.6} imply that for some $c_{13}$ and all $m\geq 1$,
\begin{align} \nonumber
\left| E^0_\omega\sum_{k=0}^{m} Y^n_{k,1}\right| 
&\leq \sum_{k\geq 0} |\qE^0_\omega Y^n_{k,1}|
+ \sup_{k\geq 1} E^0_\om |\bar Y^n_k|   \\
\label{j1.1}
&\leq \sum_{k\geq 0} c_{10} e^{-c_{11} k}  \beta_n +c_{12}\beta_n
\leq  c_{13} \beta_n .
\end{align}
All estimates that we derived for $Y^n_{k,1}$'s apply to $Y^n_{k,2}$'s as well, by symmetry.

Note that $|X^{(n)}(U^n_{k+1} ) - X^{(n)}(T^n_{k} )|\geq \beta_n/2$.
We have
$V^n_{k+1}  - T^n_{k} \geq U^n_{k+1}  - T^n_{k} $ so
we can assume \assmp that $b_n/a_{n-1}$ is so large that for some $p_1>0$ and $n_{2}$, for all $n\geq n_{2}$ and $k\geq 1$, 
\begin{align*}
P_\omega^x(V^n_{k+1}  - T^n_{k} \geq \beta_n^2
\mid \calF_{T^n_k}) \geq p_1.
\end{align*}
Let $\calV_m$ be a binomial random variable with parameters 
$m$ and $p_1$. We see that $\sigma^{n,2}(V^n_{m })= \sum_{k=0}^m V^n_{k+1}  - T^n_{k}$ is stochastically minorized by 
$\beta_n^2 \calV_m$. 

Recall that $u \geq a_n^2$.
Let $m_1$ be the smallest integer such that
\begin{align}\label{c1.1}
P^0_\omega(V^n_{m_1 } \leq u) < \delta/4.
\end{align}
Then
\begin{align}\label{c1.2}
P^0_\omega(V^n_{m_1 -1} \leq u) \geq \delta/4.
\end{align}
Since $\delta$ in \eqref{d27.4} can be arbitrarily small, we have
for for some $n_{3}$ and all $n\geq n_{3}$, 
\begin{align}\label{c1.3}
P^0_\omega(\sigma^{n,2}_u/u \leq \delta^4) \geq 1-\delta/8.
\end{align}
The following estimate follows from the fact that $\sigma^{n,2}(V^n_{m_1-1 })$ is stochastically minorized by 
$\beta_n^2 \calV_{m_1-1}$, and from \eqref{c1.2}-\eqref{c1.3},
\begin{align*}
P^0_\omega(\beta_n^2 \calV_{m_1-1}  \leq \delta^4 u) 
&\geq 
P^0_\omega(\sigma^{n,2}(V^n_{m_1 -1}) \leq \delta^4 u) \\
&\geq
P^0_\omega(\sigma^{n,2}_u \leq \delta^4 u, V^n_{m_1 -1} \leq u) 
\geq \delta/8.
\end{align*}
This implies that for some $c_{14}$, we have
$m_1 \leq c_{14}\delta^3 u/\beta_n^2$. In other words, $u \geq m_1 \beta_n^2/(c_{14} \delta^3)$. 
Note that for a fixed $\delta$, we have for large $n$, \assmp $u^{1/2}\delta/4 -  c_{13} \beta_n \geq u^{1/2} \delta/8$.
These observations, \eqref{d29.10}, \eqref{j1.1} and the Chebyshev inequality imply that for  $m\le m_1$,
\begin{align}\label{d29.22}
P^0_\omega&\left(u^{-1/2}\left(\left|\sum_{k=0}^{m} Y^n_{k,1}\right|+\left|\sum_{k=0}^{m}Y^n_{k,2}\right|\right) \geq \delta/2\right)\\
&\le
P^0_\omega\left(\left|\sum_{k=0}^{m} Y^n_{k,1}\right| \geq u^{1/2} \delta/4\right)
+ P^0_\omega\left(\left|\sum_{k=0}^{m} Y^n_{k,2}\right| \geq u^{1/2} \delta/4\right)
\nonumber \\
&\le
P^0_\omega\left(\left|\sum_{k=0}^{m} Y^n_{k,1}
- E^0_\omega\sum_{k=0}^{m} Y^n_{k,1}\right| \geq u^{1/2} \delta/4
- c_{13} \beta_n\right)\nonumber \\
&\qquad + P^0_\omega\left(\left|\sum_{k=0}^{m} Y^n_{k,2}
- E^0_\omega\sum_{k=0}^{m} Y^n_{k,2}\right| \geq u^{1/2} \delta/4
- c_{13} \beta_n\right)
\nonumber \\
&\le  \frac{\Var\left(\sum_{k=0}^{m} Y^n_{k,1}\right)}
{u \delta^2/64} + \frac{\Var\left(\sum_{k=0}^{m} Y^n_{k,2}\right)}
{u \delta^2/64} \nonumber \\
&\le  \frac{2c_{2} m_1 \beta_n^2}
{(c_{14}^{-1}\delta^{-3} m_1\beta_n^2) \delta^2/64} \le c_{15} \delta.
\nonumber
\end{align}
Let $M = \min\{m\geq 1: 
u^{-1/2}\left(\left|\sum_{k=0}^{m} Y^n_{k,1}\right|+\left|\sum_{k=0}^{m}Y^n_{k,2}\right|\right)
\geq \delta\}$. By the strong Markov property applied at $M$ and \eqref{d29.22},
\begin{align}\label{d29.23}
&P^0_\omega\left(
\sup_{1\le m \le m_1}
u^{-1/2}\left(\left|\sum_{k=0}^{m} Y^n_{k,1}\right|+\left|\sum_{k=0}^{m}Y^n_{k,2}\right|\right)
\geq \delta,\  u^{-1/2}\left(\left|\sum_{k=0}^{m_1} Y^n_{k,1}\right|+\left|\sum_{k=0}^{m_1}Y^n_{k,2}\right|\right) \le \delta/2\right)\\
&\le 
P^0_\omega\left( u^{-1/2}\left(\left|\sum_{k=0}^{m_1-M} Y^n_{k,1}\right|+\left|\sum_{k=0}^{m_1-M}Y^n_{k,2}\right|\right) \geq \delta/2 \mid M < m_1\right)
\le c_{15} \delta. \nonumber
\end{align}

Recall that $u \geq m_1 \beta_n^2/(c_{14} \delta^3)$. For a fixed $\delta$ and large $n$, \assmp $u^{1/2}\delta - 2 c_{12} \beta_n \geq u^{1/2} \delta/2$.
It follows from this, \eqref{ma24.6} and \eqref{ma24.7} that
\begin{align}\label{ma25.1}
P^0_\om\left(\exists k \leq m_1: |\bar Y^n_k| \geq u^{1/2}\delta  \right) 
&\leq 
m_1 \sup_{k\leq m_1} P^0_\om\left( |\bar Y^n_k| \geq u^{1/2}\delta  \right) \\
&\leq 
m_1 \sup_{k\leq m_1} P^0_\om\left( |\bar Y^n_k| -E^0_\om |\bar Y^n_k|\geq u^{1/2}\delta - c_{12} \beta_n \right)\nonumber \\
&\le m_1 \frac {c_{11} \beta_n^2}{ u  \delta^2 /4}  
\le m_1 \frac{c_{11}  \beta_n^2}
{(c_{14}^{-1}\delta^{-3} m_1\beta_n^2) \delta^2} \le c_{16} \delta.
\nonumber
\end{align}

We use \eqref{c1.1}, \eqref{d29.22}, \eqref{d29.23} and \eqref{ma25.1} to obtain
\begin{align*}
&P^0_\omega \left(\sup_{0\le s \le u} u^{-1/2} |X^{n,2}_s| \geq 2\delta\right)\\
 &\le P^0_\omega(V^n_{m_1 } \le u) 
+ P^0_\omega\left(u^{-1/2}\left(\left|\sum_{k=0}^{m_1} Y^n_{k,1}\right|+\left|\sum_{k=0}^{m_1}Y^n_{k,2}\right|\right) \geq \delta/2\right)\\
& + P^0_\omega\left(
\sup_{1\le m \le m_1}
u^{-1/2}\left(\left|\sum_{k=0}^{m} Y^n_{k,1}\right|+\left|\sum_{k=0}^{m}Y^n_{k,2}\right|\right)
\geq \delta,\  u^{-1/2}\left(\left|\sum_{k=0}^{m_1} Y^n_{k,1}\right|+\left|\sum_{k=0}^{m_1}Y^n_{k,2}\right|\right) \le \delta/2\right)\\
& +
P^0_\om\left(\exists k \leq m_1: |\bar Y^n_k| \geq u^{1/2}\delta  \right) \\
&\le \delta/4 + c_{15} \delta + c_{15} \delta + c_{16}\delta.
\end{align*}
Since $\delta>0$ is arbitrarily small, this implies that for every $\delta>0$, some $n_{3}$ and all $n\geq n_{3}$,
\begin{align*}
P^0_\omega &\left(\sup_{0\le s \le u} u^{-1/2} |X^{n,2}_s| \geq \delta\right)\le \delta/2.
\end{align*}
This and \eqref{n8.5} yield the proposition.
\end{proof}

Recall from \eqref{e:bfPdef} the definition of the averaged measure $\bfP$.

\begin{lemma}\label{n9.1}
For every $\delta>0$ there exists $n_1$ such that for all $n\geq n_1$ and  $u\geq a_n^2$,  
\begin{align}\label{n9.2}
\bfP
\left(  \sigma^{n,2}_u / u \le \delta, \sup_{0\le s \le u} u^{-1/2} |X^{n,2}_s| \le \delta \right) \geq 1-\delta.
\end{align}
\end{lemma}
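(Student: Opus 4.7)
The plan is to deduce Lemma \ref{n9.1} from Proposition \ref{d22.2} by averaging over $\om$ and controlling the $\bP$-probability of the "bad" environments excluded from the hypothesis of that proposition. Write
\begin{align*}
 B_n = \{\om \in \Omega : 0 \in \Gamma^1_n \setminus \prt_i \Gamma^1_n\},
\qquad
 G_{n,u} = \left\{ \sigma^{n,2}_u/u \le \delta,\ \sup_{0\le s\le u} u^{-1/2}|X^{n,2}_s| \le \delta \right\}.
\end{align*}
Proposition \ref{d22.2}, applied with $\delta/2$ in place of $\delta$, yields some $n_1'$ such that for every $n\ge n_1'$, every $u\ge a_n^2$, and every $\om \notin B_n$,
\bes
 P^0_\om(G_{n,u}) \ge 1-\delta/2,
\ees
since the event in the proposition with parameter $\delta/2$ is contained in $G_{n,u}$.

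The main task is therefore to show $\bP(B_n) \to 0$ as $n\to\infty$. I would argue this by a simple density/stationarity computation. Recall that $\Gamma^1_n = \sK(2\beta_n) = \bigcup_{z\in\sC_n}\ball(z,2\beta_n)$, where $\sC_n = u_n+\sO_n + a_n\bZ^2$ is periodic with period $a_n$ in each coordinate. Hence $\Gamma^1_n\setminus\prt_i\Gamma^1_n$ intersects the fundamental domain $B_n'$ in a set of cardinality at most $(4\beta_n-1)^2 \le 16\beta_n^2$. By the stationarity of the environment from Lemma \ref{L:erg}(a), the event $\{x\in\Gamma^1_n\setminus\prt_i\Gamma^1_n\}$ has $\bP$-probability independent of $x$, so summing over $x\in B_n'$ gives
\bes
 \bP(B_n) \cdot a_n^2 = \bE|(\Gamma^1_n\setminus\prt_i\Gamma^1_n) \cap B_n'| \le 16\beta_n^2.
\ees
By condition (iv) on $b_n$ and \eqref{j2.10}, $\beta_n \le b_n n^{1/4} \le a_n n^{-1/4}$, so
\bes
 \bP(B_n) \le 16\beta_n^2/a_n^2 \le 16 n^{-1/2} \to 0.
\ees
Pick $n_1''$ so that $\bP(B_n) \le \delta/2$ for all $n\ge n_1''$, and set $n_1 = \max(n_1',n_1'')$.

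Finally, for $n\ge n_1$ and $u\ge a_n^2$, the definition $\bfP(\cdot) = \bE P^0_\om(\cdot)$ and the splitting $\Omega = B_n \cup B_n^c$ give
\begin{align*}
 \bfP(G_{n,u}^c)
 &= \bE\!\left[\I_{B_n^c} P^0_\om(G_{n,u}^c)\right] + \bE\!\left[\I_{B_n} P^0_\om(G_{n,u}^c)\right] \\
 &\le (1-\bP(B_n))\cdot(\delta/2) + \bP(B_n)\cdot 1 \\
 &\le \delta/2 + \delta/2 = \delta,
\end{align*}
which is exactly \eqref{n9.2}. The only real content is the density estimate $\bP(B_n)=O(\beta_n^2/a_n^2)$; everything else is a routine averaging. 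I do not anticipate a major obstacle, since Proposition \ref{d22.2} has already absorbed the substantive work.
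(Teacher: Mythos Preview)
Your proof is correct and follows essentially the same approach as the paper: apply Proposition \ref{d22.2} with $\delta/2$, estimate $\bP(0\in\Gamma^1_n\setminus\prt_i\Gamma^1_n)$ via the density $\beta_n^2/a_n^2 = O(n^{-1/2})$, and average. The paper's argument is the same, with only cosmetic differences in the constants and in how the density estimate is phrased.
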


\begin{proof}
By Proposition \ref{d22.2} applied to $\delta/2$ in place of $\delta$, for every $\delta>0$ there exists $n_2$ such that for all $n\geq n_2$,  $u\geq a_n^2$, and $\omega$ such that $0 \notin \Gamma^1_n \setminus \prt_i \Gamma^1_n$, 
\begin{align}\label{n9.3}
P^0_\omega
\left(  \sigma^{n,2}_u / u \le \delta, \sup_{0\le s \le u} u^{-1/2} |X^{n,2}_s| \le \delta \right) \geq 1-\delta/2.
\end{align}

Let $|A|$ denote the cardinality of $A\subset \Z^2$. Since $|\Gamma^1_n| \leq 25 \beta_n^2 \leq 25 a_n^2 n^{-1/2} = 25 n^{-1/2} |B'_n|$, the definitions of $\sO_n$ and $\Gamma^1_n$ imply that $\bfP(0 \in \Gamma^1_n \setminus \prt_i \Gamma^1_n) < \delta/2$ for some $n_3 \geq n_2$ and all $n \geq n_3$. This and \eqref{n9.3} imply \eqref{n9.2}.
\end{proof}

In the following lemma and its proof, when we write the Prokhorov distance between processes such as $\{ (1/a)X^{(n-1)}_{ta^2}, t\in[ 0,1]\}$, we always assume that they are distributed according to $\bfP$.

\begin{lemma}\label{d22.1}
There exists a function $\rho^*: (0,\infty) \to (0,\infty)$ with 
$ \lim_{\delta\downarrow 0} \rho^*(\delta) = 0$ 
and a sequence $\{a_n\}$ with the following properties,
\begin{align}\label{d19.3}
&\dP(\{ (1/a)X^{(n-1)}_{ta^2}, t\in[ 0,1]\}, \PBM) \le 2^{-n},\qquad a \geq a_n.
\end{align}
Moreover, suppose that for $\delta<1/2$ and all $u\geq a_n^2$, 
\begin{align}\label{d19.2}
\bfP \left(  \sigma^{n,2}_u / u \le \delta, \sup_{0\le s \le u} u^{-1/2} |X^{n,2}_s| \le \delta \right) \geq 1-\delta.
\end{align}
Then
$\dP( \{(1/a)X^{(n)}_{ta^2}, t\in[ 0,1]\}, \PBM) \le 2^{-n} + \rho^*(\delta)$, for all $a\geq a_n$.
\end{lemma}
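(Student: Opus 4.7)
The first assertion \eqref{d19.3} is an immediate consequence of \eqref{e:PdistBM}: since $\bfP(G) = \bE P^0_\omega(G)$, integrating the quenched Prokhorov bound against $\bP$ yields the same bound for the $\bfP$-laws. I therefore take $\{a_n\}$ exactly as in the construction preceding \eqref{e:PdistBM}, so that \eqref{d19.3} holds automatically.

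For the main claim, fix $a \geq a_n$ and set $u = a^2$. Using the decomposition $(1/a)X^{(n)}_{ta^2} = (1/a)X^{n,1}_{ta^2} + (1/a)X^{n,2}_{ta^2}$ together with the hypothesis \eqref{d19.2}, which gives $\sup_{t\in[0,1]}|(1/a)X^{n,2}_{ta^2}| \leq \delta$ on a set of $\bfP$-probability at least $1-\delta$, Lemma \ref{ma26.5} yields
\[
\dP\bigl((1/a)X^{(n)}_{\cdot a^2},\,(1/a)X^{n,1}_{\cdot a^2}\bigr) \leq \delta.
\]
It remains to compare $(1/a)X^{n,1}_{\cdot a^2}$ with $\PBM$. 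Since $X^{n,1}$ is constant on intervals where $\sigma^{n,1}$ is constant, the identity $X^{n,1}_s = \wh X^{n,1}(\sigma^{n,1}_s)$ holds, so after rescaling
\[
(1/a)X^{n,1}_{ta^2} = W^*(\tilde\sigma(t)),\qquad W^*(s) := (1/a)\wh X^{n,1}_{sa^2},\qquad \tilde\sigma(t) := t - a^{-2}\sigma^{n,2}_{ta^2}.
\]
By \eqref{e:Xhatdsn} the $\bfP$-law of $W^*$ equals that of $(1/a)X^{(n-1)}_{\cdot a^2}$, so \eqref{d19.3} gives $\dP(W^*,\PBM) \leq 2^{-n}$.

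By Strassen's theorem I enlarge the probability space to support a standard Brownian motion $W$ coupled with $W^*$ so that $\Sd(W^*,W) \leq 2^{-n}$ off a set $F_1$ with $\bfP(F_1) \leq 2^{-n}$. On the event $F_2$ of \eqref{d19.2}, monotonicity of $\sigma^{n,2}$ forces $0 \leq t - \tilde\sigma(t) \leq \delta$ uniformly in $t \in [0,1]$, and $\tilde\sigma$ is continuous nondecreasing with $\tilde\sigma(0)=0$. Choose $\rho(\delta)\to 0$ as in Lemma \ref{d21.1} with $\bfP(\Osc(W,\delta) \geq \rho(\delta)) \leq \rho(\delta)$, and observe that on $F_1^c$ the inequality $\Sd(W^*,W)\leq 2^{-n}$ forces $\Osc(W^*,\delta) \leq \Osc(W,\delta+2^{-n+1}) + 2^{-n+1}$. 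Applying Lemma \ref{d21.2} with $x = W^*$, $y = W$, $\sigma = \tilde\sigma$, $\eps = 2^{-n}$ and $\delta_1 = \rho(\delta) + 2^{-n+1}$, on $F_1^c \cap F_2 \cap \{\Osc(W,\delta+2^{-n+1})\leq \rho(\delta)\}$ I obtain
\[
\Sd\bigl(W^*(\tilde\sigma(\cdot)),\,W(\tilde\sigma(\cdot))\bigr) \leq 2^{-n} + 2\rho(\delta) + 2^{-n+2}.
\]
Lemma \ref{d21.1} applied to $W(\tilde\sigma(\cdot))$ gives $\dP(W(\tilde\sigma(\cdot)),\PBM)\leq \rho(\delta)+\delta$. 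Combining via the triangle inequality for $\dP$ and absorbing the probabilities of the bad events (at most $\delta + \rho(\delta) + 2^{-n+1}$ in total), I conclude
\[
\dP\bigl((1/a)X^{(n)}_{\cdot a^2},\,\PBM\bigr) \leq 2^{-n} + \rho^*(\delta),
\]
with $\rho^*(\delta) := C(\rho(\delta)+\delta)$ for a suitable absolute constant $C$; clearly $\rho^*(\delta)\to 0$ as $\delta\to 0$.

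The main subtlety lies in the bookkeeping around the Strassen coupling: one has to verify that the oscillation hypothesis of Lemma \ref{d21.2} is inherited by $W^*$ from $W$ via the coupling (the short calculation above), and then track the probabilities of the three bad events --- failure of the coupling, failure of the oscillation bound for $W$, and failure of \eqref{d19.2} --- so that they aggregate into a single error term that vanishes with $\delta$.
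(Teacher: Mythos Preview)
Your argument is correct and follows the same route as the paper: split $X^{(n)}=X^{n,1}+X^{n,2}$, dispose of $X^{n,2}$ via Lemma~\ref{ma26.5}, write $X^{n,1}$ as the time change $\wh X^{n,1}\circ\sigma^{n,1}$ with $\wh X^{n,1}\eqd X^{(n-1)}$, and then push the Prokhorov bound \eqref{d19.3} through the time change using Lemmas~\ref{d21.2} and~\ref{d21.1}; the paper does exactly this, with the Strassen coupling you spell out left implicit in the phrase ``Lemma~\ref{d21.2} and the definition of the Prokhorov distance imply that \ldots''.

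The one substantive difference is how the oscillation hypothesis of Lemma~\ref{d21.2} is obtained for $W^*=(1/a)\wh X^{n,1}_{\cdot a^2}$. The paper imposes an additional \assmp\ condition on $a_n$ (via Theorem~\ref{T:eK}) so that $\bP\bigl(\Osc((1/a)\wh X^{n,1}_{\cdot a^2},\delta)\ge 2\rho(\delta)\bigr)<2\rho(\delta)$ directly; this keeps the coefficient of $2^{-n}$ equal to~$1$ and gives $\rho^*(\delta)=5\rho(\delta)+2\delta$. Your alternative---inferring $\Osc(W^*,\delta)$ from $\Osc(W,\delta+2^{-n+1})$ through the coupling---is legitimate but (i) produces extra $O(2^{-n})$ terms, so your bound is really $C\cdot 2^{-n}+\rho^*(\delta)$ rather than $2^{-n}+\rho^*(\delta)$ (harmless for the application, or fixable by sharpening \eqref{e:PdistBM} to $2^{-n}/C$), and (ii) needs $\rho$ chosen so that $\bP(\Osc(W,2\delta)\ge\rho(\delta))\le\rho(\delta)$, since the event you actually use is $\{\Osc(W,\delta+2^{-n+1})\le\rho(\delta)\}$ rather than $\{\Osc(W,\delta)\le\rho(\delta)\}$.
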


\begin{proof}

Formula \eqref{d19.3} is  special case of \eqref{e:PdistBM}.

Fix some $a\geq a_n$. We will apply \eqref{d19.2} with $u=a^2$. 
Note that on the event in \eqref{d19.2} we have
\begin{align}\label{n9.5}
1- \sigma^{n,1}_{a^2}/a^2= u/u- \sigma^{n,1}_u/u 
= \sigma^{n,2}_u / u \le \delta.
\end{align}
The function $t\to \sigma^{n,1}_{ta^2}/a^2$ is Lipschitz with the constant 1 and $\sigma^{n,1}_{ta^2}/a^2 \leq t$ so \eqref{n9.5} implies for $t\in[0,1]$,
\begin{align}\label{n9.6}
t- \sigma^{n,1}_{ta^2}/a^2 \leq 1- \sigma^{n,1}_{a^2}/a^2 \leq \delta.
\end{align}

Recall the function $\rho(\delta)$ from the proof of Lemma \ref{d21.1}, 
such that $\PBM(\Osc(W,\delta) \geq\rho(\delta) )<\rho(\delta)$ and 
$\lim_{\delta\downarrow 0} \rho(\delta) = 0$.
By \eqref{n9.6}, we can apply Lemma \ref{d21.1} with $\sigma_t = \sigma^{n,1}_{ta^2}/a^2$. Recall
that  $W^*(t) = W(\sigma_t)$.
By the definition of $\wh X^{n,1}$,
\begin{align}
\nonumber
&\dP( \{(1/a) X^{n,1}_{ta^2}, t\in[0,1]\}, \PBM) \\
\nonumber
&\le  \dP( \{(1/a) X^{n,1}_{t/a^2}, t\in[0,1]\}, \{W^*_t, t\in[0,1]\})
 + \dP( \{W^*_t, t\in[0,1]\}, \PBM) \\ 
\nonumber 
&\le  \dP( \{(1/a) X^{n,1}_{ta^2}, t\in[0,1]\}, \{W^*_t, t\in[0,1]\})
 + \rho(\delta) + \delta\\ 
&= \dP( \{(1/a)\wh X^{n,1}(\sigma^{n,1}_{ta^2}), t\in[0,1]\}, \{W(\sigma^{n,1}_{ta^2}/a^2), t\in[0,1]\})
 + \rho(\delta) + \delta  .
\end{align}

Recall from \eqref{e:Xhatdsn} that for a fixed $\omega\in \Omega$, the distribution of $\{\wh X^{n,1}_t, t\geq 0\}$ 
is the same as that of $\{X^{n-1}_t, t\geq 0\}$.
In view of Theorem \ref{T:eK}, we can make $a_n$ so large \assmp that 
$\Pp(\Osc(\wh X^{n,1},\delta) \geq 2\rho(\delta) )< 2\rho(\delta)$. 
This, Lemma \ref{d21.2} and the definition of the Prokhorov distance imply that
\begin{align*}
\dP( \{(1/a)\wh X^{n,1}&(\sigma^{n,1}_{ta^2}), \, t\in[0,1]\}, \{W(\sigma^{n,1}_{ta^2}/a^2), t\in[0,1]\})  \\
&\le 
\dP( \{(1/a)\wh X^{n,1}_{ta^2}, t\in[0,1]\}, \{W_{t}, t\in[0,1]\}) + 4\rho(\delta) \\
&= \dP( \{(1/a) X^{(n-1)}_{ta^2}, t\in[0,1]\}, \{W_{t}, t\in[0,1]\}) + 4\rho(\delta) \\
&\le  2^{-n}  + 4\rho(\delta).
\end{align*}
In the final two lines line we used  \eqref{e:Xhatdsn} and \eqref{d19.3}.

Combining the estimates above, since
$P^0_\om \left(  \sup_{0\le s \le u} u^{-1/2} |X^{n,2}_s| \le \delta \right) \geq 1-\delta$ and 
$X^{(n)} = X^{n,1} + X^{n,2}$, Lemma \ref{ma26.5} shows that
\begin{align*}
&\dP( \{(1/a) X^{(n)}_{ta^2}, t\in[0,1]\}, \PBM) \\
&\le
\dP( \{(1/a) X^{(n)}_{ta^2}, t\in[0,1]\}, \{(1/a) X^{n,1}_{ta^2}, t\in[0,1]\}) \\
&\qquad + \dP( \{(1/a) X^{n,1}_{ta^2}, t\in[0,1]\}, \PBM) \\
&\le 
\delta + 2^{-n}  + 5 \rho(\delta) + \delta .
\end{align*}
We conclude that the lemma holds if we take $\rho^*(\delta) =  5\rho(\delta) + 2\delta $.
\end{proof}

\begin{proof}[Proof of Theorem \ref{T:main}]
Choose an arbitrarily small $\eps>0$. We will show that there exists $a_*$ such that for every $a\geq a_*$, 
\begin{align}\label{d23.1}
&\dP( \{(1/a) X_{ta^2}, t\in[0,1]\}, \PBM) \le \eps .
\end{align}

Recall $\rho^*$ from Lemma \ref{d22.1}.
Let $n_1 $ be such that $2^{-n_1}\le \eps/4$ and let $\delta>0$ be so small that 
$2^{-n_1}+ \rho^*(\delta) < \eps/2$. Let $n_2$ be defined as $n_1$ in Lemma \ref{n9.1}, 
relative to this $\delta$. Then, according to Lemma \ref{d22.1}, 
\begin{align}\label{d22.5}
\dP( \{(1/a)X^{n}_{ta^2}, t\in[ 0,1]\}, \PBM) \le 2^{-n} + \rho^*(\delta) < \eps/2,
\end{align}
for all $n\geq n_3 := n_1\lor n_2$ and $a\geq a_n$.

For a set $K$ let 
$\calB(K,r) = \{z: \dist(z,K) < r\}$ and recall the definition of $D_n$ given in \eqref{ma26.1}. Let 
\begin{align*}
F_1 &= \{0\in \calB( D_{n+1}, a_{n+1}/\log (n+1))\},\\
F_2 &= \{0\notin \calB( D_{n+1}, a_{n+1}/\log (n+1))\}
\cap
\{\exists t\in[0, a_{n+1}^2]: X^{(n)}_t \in  D_{n+1}\},\\
G_1^k &= \{0\in \calB( D_k, b_k/k)\},
\qquad k> n+1,\\
G_2^k &= \{0\notin \calB( D_k, b_k/k)\}
\cap
\{\exists t\in[0, a_{n+1}^2]: X^{(n)}_t \in  D_k\}, \qquad k> n+1.  
\end{align*}

The area of  $\calB(D_{n+1}, a_{n+1}/\log (n+1))$ is bounded by $c_1 (a_{n+1}/\log (n+1))^2$ so 
\begin{align}\label{d23.10}
\Pp(F_1) \le c_1 (a_{n+1}/\log (n+1))^2/ a_{n+1}^2 = c_1 /\log^2 (n+1).
\end{align}
We choose $n_4 > n_3 $ such that 
$c_1 /\log^2 (n+1) < \eps/8$ for $n \geq n_4$.

Note that $D_{n+1}$ is a subset of a square with side $4\beta_{n+1} \leq 4 a_{n+1} n^{-1/4}$. This easily implies that
there exists $n_5 \geq n_4$ such that for $n\geq n_5$,
\begin{align*}
\PBM\left(\exists t\in[0, a_{n+1}^2]: W(t) \in  D_{n+1} 
\mid 
0\notin \calB( D_{n+1}, a_{n+1}/\log (n+1))
\right) \le \eps/16.
\end{align*}
We can assume \assmp that $a_{n+1}/a_n$ is so large that
for some $n_6 \geq n_5$ and all $n\geq n_6$,  
\begin{align}\label{d23.11}
\Pp(F_2) &\le \Pp\left(\exists t\in[0, a_{n+1}^2]: X^{(n)}_t \in  D_{n+1} 
 \mid  0\notin \calB( D_{n+1}, a_{n+1}/\log (n+1)) \right)   \\
 &\le \eps/8.
\end{align}

The area of  $\calB( D_k, b_k/k)$ is bounded by $c_2 b_k^2/k$ so 
\begin{align}\label{d23.12}
\Pp(G_1^k) \le (c_2 b_k^2/k)/ a_k^2 \le c_3 ( b_k^2/k)/ (k b_k^2)= c_3 /k^2.
\end{align}
We let $n_7 >n_6$ be so large that $\sum_{k\geq n_7} c_3 /k^2 < \eps/8$.
For all $k>n+1\geq n_7+1$, we make $b_k/k$ so large \assmp that 
\begin{align}\label{d23.13}
\Pp(G_2^k) \le 
\Pp\left(\sup_{t\in[0, a_{n+1}^2]} |X^{n}_t| \geq b_k/k\right) \le c_3/k^2.
\end{align}

We combine \eqref{d23.10}, \eqref{d23.11}, \eqref{d23.12} and \eqref{d23.13} to see that for $n\geq n_7$,
\begin{align}\label{d23.14}
\Pp&(\exists t\in[0, a_{n+1}^2] \ \exists k\geq n+1: X^{(n)}_t \in  D_k)\\
&\le
\Pp(F_1) +  \Pp(F_2) + \sum_{k>n+1}
\Pp(G_1^k) + \sum_{k>n+1}
\Pp(G_2^k) \nonumber\\
& \le \eps/8 + \eps/8 + \eps/8 + \eps/8 = \eps/2.
\nonumber
\end{align}

Let $R_{n+1} = \inf\{t\geq 0: X_t \in \bigcup_{k\geq n+1} \calD_k\}$. 
It is standard to construct $X$ and $X^{(n)}$ on a common probability space so that $X_t = X_t^n$ for all $t\in [0, R_{n+1})$. This and \eqref{d23.14}
imply that for $n\geq n_7$ and all $a\in[a_n, a_{n+1}]$ we have
\begin{align*}
P(\exists t\in[ 0,1]: (1/a)X_{ta^2} \ne (1/a)X^{(n)}_{ta^2}) \le \eps/2.
\end{align*}
We combine this with \eqref{d22.5} to see that for all $a\geq a_{n_6}$, 
\begin{align*}
\dP( \{(1/a)X_{ta^2}, t\in[ 0,1]\}, \PBM) \le  \eps/2 +\eps/2 = \eps.
\end{align*}
We conclude that \eqref{d23.1} holds with $a_* = a_{n_7}$. 

This completes the proof of AFCLT. The WFCLT then follows from Theorem 2.13 of \cite{BBT1}.
\end{proof}

\end{document}